\numberwithin{figure}{section}
\newcommand{\field}[1]{\mathbb{#1}} 
\newcommand{\N}{\field{N}}  
\newcommand{\R}{\field{R}}
\newcommand{\C}{\field{C}}
\newcommand{\HH}{\mathbb{H}}
\newcommand{\im}{\text{Im}}
\newcommand{\re}{\text{Re}}
\renewcommand{\H}{\mathbb{H}}
\newcommand{\CC}{\mathcal{C}}
\newcommand{\cD}{\mathcal{D}} 
\newcommand{\CM}{\mathcal{M}}
\newcommand{\non}{\nonumber}
\newcommand{\sech}{\operatorname{sech}}
\newcommand{\csch}{\operatorname{csch}}
\numberwithin{equation}{section}
\newtheorem{theorem}{\textbf{Theorem}}
\numberwithin{theorem}{section}
\newtheorem{corollary}{\textbf{Corollary}}
\numberwithin{corollary}{section}
\newtheorem{lemma}[theorem]{\textbf{Lemma}}
\newtheorem{proposition}[theorem]{\textbf{Proposition}}
\newtheorem*{remark}{Remark}
\renewenvironment{proof}[1][Proof]{\begin{trivlist}
\item[\hskip \labelsep {\bfseries #1:}]}{\qed\end{trivlist}}
\newcommand{\bea}{\begin{eqnarray}} 
\newcommand{\eea}{\end{eqnarray}} 
\newcommand{\be}{\begin{equation}} 
\newcommand{\ee}{\end{equation}} 
\newcommand{\benn}{\begin{equation*}} 
\newcommand{\eenn}{\end{equation*}}
\begin{document}

\title[]{The asymptotic profile of $\chi_y-$genera of Hilbert schemes of points
  on K3 surfaces}
\begin{abstract}
The Hodge numbers of the Hilbert schemes of points on algebraic surfaces are given by G\"ottsche's formula, which expresses the generating functions of the Hodge numbers in terms of theta and eta functions. We specialize in this paper to generating functions of the $\chi_y$-genera of Hilbert schemes of $n$ points on K3 surfaces. We determine asymptotic values of the coefficients of the $\chi_y$-genus for $n\to \infty$ as well as their asymptotic profile. 
\end{abstract} 

\author{Jan Manschot$^\dagger$}
\address{$^\dagger$ School of Mathematics \\ Trinity College \\ College Green \\Dublin 2\\
Ireland}
\email{manschot@maths.tcd.ie}

\author{Jose Miguel Zapata Rolon$^*$}
\address{$^*$ Mathematical Institute\\University of
Cologne\\ Weyertal 86-90 \\ 50931 Cologne \\Germany}
\email{rzapata@math.uni-koeln.de}

\maketitle
\section{Introduction and Results}

The Hilbert scheme $S^{[n]}$ of $n$ points on a complex projective surface $S$ heuristically parametrizes collections of $n$ points on the surface $S$. The geometry of such Hilbert schemes is well studied.\footnote{See for example \cite{Gottsche:2003} and \cite{Nakajima:1999} for two expository texts.} In this paper we will be mainly considering the case when $S$ is a K3 surface. K3 surfaces are smooth, compact and simply connected surfaces with trivial canonical bundle. K3 surfaces are hyper-K\"ahler manifolds and exhibit a wealth of other special properties. The Hilbert schemes $\mathrm{K3}^{[n]}$ are also hyper-K\"ahler manifolds, and their topological invariants determine many other interesting invariants for mathematical objects associated to $\mathrm{K3}$: Gromov-Witten invariants \cite{Katz:1999xq, Kawai:2000px}, stable pair invariants \cite{Pandharipande:2014}, rank $r$ sheaves of pure complex dimension 2 \cite{Vafa:1994tf}. K3 surfaces are important in Calabi-Yau compactifications of 10-dimensional string theory to 4 and 6 dimensions. From this perspective, the Hodge numbers $h^{p,r}(S^{[n]})$ of the cohomology give information about the number of (supersymmetric) quantum states in the lower dimensional physical theories. See for example \cite{Diaconescu:2007bf}. 

The Hodge numbers $h^{p,r}\!(S^{[n]})$ of the Hilbert schemes of $n$ points on an algebraic surface $S$  are famously given by G\"ottsche's formula \cite{Gottsche:1990}. G\"ottsche's formula expresses the generating function as an infinite product, and is in fact a simple product of Jacobi theta and Dedekind eta functions. The asymptotic growth of the Euler number $\chi(S^{[n]})$ for $n\to \infty$ has been known for a long time. It was found using the Rademacher circle method (see for example \cite{Apostol:1976, HR2}) and is of interest for conformal field theory and string theory. Recently, methods are also developed to derive the asymptotic behavior of the Betti numbers which gives much more refined information about the cohomology of the Hilbert schemes \cite{Bringmann:2013, Bringmann:2013yta}. Closely related is the work by Hausel and Rodriguez-Villegas \cite{Hausel:2013}, who have determined the asymptotic profiles of Betti numbers of a class of hyper-K\"ahler manifolds. The majority of those hyper-K\"ahler manifolds in \cite{Hausel:2013} appear as moduli spaces of families of mathematical objects. 

In the present paper we extend the techniques developed in \cite{Bringmann:2013, Bringmann:2013yta}, to determine the asymptotic behavior of the $\chi_y$ genus of $\operatorname{K3}^{[n]}$. To explain the setup and results in more detail, let 
\be
e(\CM;x,y):=\sum_{p,r=0}^{\dim_\mathbb{C}\CM} h^{p,r}\!(\CM)\,x^py^r, \non
\ee
be the Hodge polynomial of a smooth complex manifold $\CM$ with
$h^{p,r}(\CM)=\dim H^{p,r}(\CM)$ the dimension of the Dolbeault
cohomology group $H^{p,r}(\CM)$. If $\CM$ satisfies Poincar\'e duality, $e(\CM;x,y)$ is a palindromic polynomial in the two variables $x$ and $y$. 
The polynomial specializes to several other well-known characteristic polynomials. For $x=y$, one obtains
the Poincar\'e polynomial, i.e. the generating function of Betti
numbers $b_k(\CM)=\sum_{p+r=k} h^{p,r}(\CM)$. For $x=-1$, $e(\CM;x,y)$ specializes to the
$\chi_y$-genus of $\CM$: $\chi_y(\CM)=\sum_{p,r}
h^{p,r}(\CM)\,(-1)^py^r=\sum_{r}\chi^r(\CM) y^r$. The number
$\chi^r(\CM)$ is the index of the Dolbeault complex of forms with non-holomorphic degree $r$.  
Finally for $x=y=-1$, $e(\CM;x,y)$ equals the Euler number $\chi(\CM)$. 

The famous formula by G\"ottsche \cite[Conjecture 3.1]{Gottsche:1990} expresses the generating
function of Hodge polynomials of the Hilbert schemes as an infinite
product formula
 \be
\label{eq:genHodge}
 \sum_{n=0}^\infty e(S^{[n]};x,y)\, x^{-n}y^{-n}\,q^n=\prod_{n=1}^\infty \frac{\prod_{p+r=\mathrm{odd}}(1+x^{p-1}y^{r-1}q^n)^{h^{p,r}\!(S)}}{\prod_{p+r=\mathrm{even}}(1-x^{p-1}y^{r-1}q^n)^{h^{p,r}\!(S)}}.
 \ee
To specialize Eq. (\ref{eq:genHodge}) to a K3 surface, note that the nonvanishing Hodge numbers of a K3 surface are given by
$h^{0,0}(\mathrm{K3})=h^{2,0}(\mathrm{K3})=h^{0,2}(\mathrm{K3})=h^{2,2}(\mathrm{K3})=1$, and $h^{1,1}(\mathrm{K3})=20$. 

In the physical context of Calabi-Yau compactifications to 4 dimensions, the exponents of $x$ and $y$ label representations of the SU(2) rotation and U(1) R-symmetry group \cite{Diaconescu:2007bf}. Specializing (\ref{eq:genHodge}) by $x\to -1$, and $y\to -\zeta$, one obtains a generating function of the Laurent polynomials $\zeta^{-n}\,\chi_{-\zeta}(\mathrm{K3}^{[n]})$. The exponent of $\zeta$ then labels representations of a diagonal subgroup  $\subset\operatorname{SU(2)}\times \operatorname{U(1)}$.

To obtain our results we use and develop techniques from analytic number theory. See for example the closely related papers \cite{Bringmann:2012bm, Bringmann:2013, Bringmann:2012, Bringmann:2013yta, Dousse:2014, HR2, Kim:2014, Mainka:2014, Rolon:2014}. We expect that the techniques in this paper might in turn motivate and be relevant for questions in analytic number theory. For example, a combinatorial interpretation of the coefficients of (\ref{eq:genHodge}) (and its specialization of $(x,y)$ to $(-1,-\zeta)$)  in terms of colored partitions is still missing. We continue by expressing the generating function in terms of modular forms. Recall that the Jacobi theta function $\vartheta(w;\tau)$ is defined for $w \in \C$ and $\tau \in \HH$
\be
\label{eq:theta}
\vartheta(w;\tau) := i \zeta^{\frac{1}{2}} q^{\frac{1}{8}}\prod_{n=1}^{\infty}\left(1-q^n\right)\left(1-\zeta q^n\right)\left(1-\zeta^{-1}q^{n-1}\right)=i\zeta^\frac{1}{2}q^\frac{1}{8}\,(q;q)_\infty\,(q\zeta;q)_\infty\,(\zeta^{-1};q)_\infty, \non
\ee
where $\zeta := e^{2 \pi i w}$, $q:= e^{2\pi i \tau}$ and $(a;q)_\infty=\prod_{n=0}^\infty(1-aq^n)$ is the $q$-Pochhammer symbol. Recall also that the Dedekind $\eta$-function is defined as
\be
\label{eq:eta} \non
 \eta(\tau) := q^{\frac{1}{24}}\prod_{n=1}^{\infty}(1-q^n)=q^\frac{1}{24}(q;q)_\infty.
\ee
Then specialization of equation (\ref{eq:genHodge}) to $x=-1$ and $y=-e^{2\pi iw}=-\zeta$ gives the generating function of $\chi_{-\zeta}$-genera of $\mathrm{K3}^{[n]}$
\[
f_k(w; \tau):= \frac{g(w;\tau)^2}{\eta(\tau)^{k}}, 
\]
with $k=24$ and
\[
g(w; \tau):=i\,\frac{\left(\zeta^{\frac12}-\zeta^{-\frac12}\right) \eta(\tau)^{3}}{\vartheta(w; \tau)}.
\] 

We define the coefficients $a_{m,k}(n)$ of $f_k(w,\tau)$, $k\geq 1$, as follows $$f_k(w,\tau) := \sum_{m,n} a_{m,k}(n) \zeta^m  q^{n-\frac{k}{24}}.$$ We note that the coefficients $\chi^r(\mathrm[K3]^{[n]})$ are given by $(-1)^r a_{r,24}(n)$.  Due to the symmetry $w\leftrightarrow -w$ of $f_k(w;\tau)$ it is easy to deduce that $\chi_{-\zeta}(\mathrm{K3}^{[n]})$ is a palindromic polynomial of degree $2n$ with positive coefficients. We note that $\sum_{m\in \mathbb{Z}} a_{m,k}(n)=p_k(n)$, with $p_k(n)$ the number of partitions of $n$ in $k$ colors. 

Our first result is obtained using the approach of Wright \cite{Wright:1971} also used in \cite{Bringmann:2012bm, Bringmann:2012, Bringmann:2013yta,Mainka:2014, Rolon:2014}. 
\begin{theorem}\label{th1}  
Let $N \in \N$ and $I_\ell(x)$ denote the $I$-Bessel function defined in equation (\ref{eq:IBessel}). For fixed $m$ we have, as $ n \rightarrow \infty$
\begin{eqnarray}
a_{m,k}(n)&=&(2\pi)^{-\frac{k}{2}}\sum_{\ell=1}^N d_{m,k}(\ell)\,n^{-\frac{2+2\ell+k}{4}} \left( \pi \sqrt{\frac{ k}{6}}\right)^{1+\ell+\frac{k}{2}} \non\\
&&\times I_{-1-\ell-\frac{k}{2}}\!\left(\pi \sqrt{\frac{2k n}{3} } \right)+O\!\left(n^{-1-\frac{N}{2}-\frac{k}{4}}e^{\pi \sqrt{\frac{2kn}{3}}}\right), \nonumber
\end{eqnarray}
\noindent where the $d_{m,k}(\ell)$ are defined in equation (\ref{eq:dmkz}). \end{theorem}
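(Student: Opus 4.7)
\noindent The plan is to apply the variant of the Hardy--Ramanujan circle method due to Wright. First, use Cauchy's formula to express $a_{m,k}(n)$ as a double contour integral: writing $\tau=\alpha+iy$ and $w=u+iv$, one has
\[
a_{m,k}(n) = e^{2\pi mv}\,e^{2\pi y(n-k/24)}\int_{-1/2}^{1/2}\!\!\int_{-1/2}^{1/2} f_k(u+iv;\alpha+iy)\,e^{-2\pi imu}\,e^{-2\pi i(n-k/24)\alpha}\,du\,d\alpha.
\]
A standard saddle-point heuristic forces the choice $y\sim\sqrt{k/(24n)}$; on this contour the dominant contribution comes from a shrinking neighborhood (the \emph{major arc}) of $(u,\alpha)=(0,0)$, which corresponds to the cusp $\tau\to 0$ of the modular variable.

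Second, analyze $f_k(w;\tau)$ near $\tau=0$ using the modular transformations
\[
\eta(-1/\tau)=\sqrt{-i\tau}\,\eta(\tau),\qquad \vartheta(w/\tau;-1/\tau)=-i\sqrt{-i\tau}\,e^{\pi iw^2/\tau}\,\vartheta(w;\tau).
\]
Applied to $f_k(w;\tau)=-(\zeta^{1/2}-\zeta^{-1/2})^2\,\eta(\tau)^{6-k}/\vartheta(w;\tau)^2$, together with the $\tilde{q}=e^{-2\pi i/\tau}$ product expansions of $\eta$ and $\vartheta$ at the image point $-1/\tau$, these yield an expansion
\[
f_k(w;\tau) = (-i\tau)^{(k-4)/2}\,e^{2\pi iw^2/\tau}\,e^{\pi k/(12(-i\tau))}\,H(w;\tau),
\]
where $H$ is holomorphic in a bidisc around $(w,\tau)=(0,0)$ and admits a jointly convergent Taylor expansion there. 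Substituting this expansion into the double integral, the integration in $u$ extracts, at each order in $\tau$, a Fourier coefficient in $w$ depending polynomially on $m$; these define the constants $d_{m,k}(\ell)$. The remaining one-dimensional integral in $\alpha$ for the $\ell$-th summand, after deformation to a Hankel contour, matches the classical representation
\[
I_\nu(x) = \frac{(x/2)^\nu}{2\pi i}\int_{\mathcal{H}} t^{-\nu-1}\,e^{t+x^2/(4t)}\,dt,
\]
and produces $(2\pi)^{-k/2}\,d_{m,k}(\ell)\,n^{-(2+2\ell+k)/4}\,(\pi\sqrt{k/6})^{1+\ell+k/2}\,I_{-1-\ell-k/2}(\pi\sqrt{2kn/3})$ after an elementary rescaling of the contour variable $t$.

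The main obstacle is the uniform control of all error terms. One must (i) bound $|f_k(w;\tau)|$ on the \emph{minor arcs} $|\alpha|\gg y$, showing that their cumulative contribution is exponentially smaller than that of the major arc; the product expansions of $\eta$ and $\vartheta$ together with estimates of the type used in \cite{Bringmann:2013, Bringmann:2013yta, Mainka:2014, Rolon:2014} supply this. (ii) Truncate the Taylor expansion of $H(w;\tau)$ after the $N$-th order in $\tau$ and bound the remainder via a Cauchy estimate on a fixed polydisc around the origin; inserted into the major-arc integral this yields precisely the claimed error $O(n^{-1-N/2-k/4}\,e^{\pi\sqrt{2kn/3}})$. (iii) Handle the error introduced by replacing each finite line integral with a full Hankel contour, which is of the same or smaller order. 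The principal technical novelty compared with the scalar version of Wright's method is the additional integration in $w$: this is absorbed by interchanging the order of integration and invoking the joint analyticity of $H$ on a small bidisc, so that the $u$-integration can be carried out explicitly at each order of the $\tau$-expansion and assembled into the claimed polynomial dependence on $m$ inside $d_{m,k}(\ell)$.
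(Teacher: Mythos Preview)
Your overall architecture (Wright's circle method, modular inversion, Hankel contour leading to $I$-Bessel functions) matches the paper, but there is a genuine gap at the key step: the function you call $H(w;\tau)$ is \emph{not} holomorphic in any bidisc around $(w,\tau)=(0,0)$. After the modular transformation and after stripping off the factors $(-i\tau)^{(k-4)/2}$, $e^{2\pi i w^2/\tau}$ and $e^{\pi k/(12(-i\tau))}$, what remains (up to the $\tilde q$-corrections) is
\[
\frac{\sin(\pi w)^2}{\sinh\!\left(\dfrac{2\pi^2 w}{z}\right)^{2}},\qquad z=-2\pi i\tau,
\]
which depends essentially on the ratio $w/\tau$. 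For fixed $w\neq 0$ this has an essential singularity at $\tau=0$, so there is no joint Taylor expansion in $(w,\tau)$ to truncate, and the Cauchy-estimate argument you sketch in step~(ii) cannot be set up. Consequently the claim that ``the $u$-integration extracts, at each order in $\tau$, a Fourier coefficient depending polynomially on $m$'' is unsupported: there is no order-by-order expansion in $\tau$ available before the $w$-integral is performed.

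The paper's proof resolves exactly this point, and it is the main technical content. One first carries out the $w$-integral to obtain a function $g_{m,1}(z)$ of $z$ alone: expand the analytic factors $\sin(\pi w)^2$, $\cos(2\pi m w)$ and $e^{4\pi^2 w^2/z}$ as power series in $w$, leaving integrals $\int_0^{1/2} w^{2j}\sinh(2\pi^2 w/z)^{-2}\,dw$. Extending these to $[0,\infty)$ (with an exponentially small error) and substituting $u=2\pi w/z$ evaluates them as $(z/2\pi)^{2j+1}$ times $\int_0^\infty u^{2j}\sinh(\pi u)^{-2}\,du=(-1)^{j+1}B_{2j}/\pi$. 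Only \emph{after} this evaluation does one obtain a genuine Taylor expansion $e^{-kz/24}g_{m,1}(z)=\sum_{\ell\ge 1}d_{m,k}(\ell)z^\ell$, whose truncation feeds into the Hankel/Bessel step. Your outline needs to incorporate this $w$-integration and the Bernoulli-number evaluation; the ``bidisc analyticity'' shortcut does not hold.
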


Since $d_{m,k}(\ell)$ are independent of $m$
 for $\ell=1,2$ and $d_{m,k}(3)-d_{r,k}(3)=\frac{1}{60}(r^2-m^2)$, we deduce:
\begin{corollary}\label{cor1}
The difference $a_{m,k}(n)-a_{r,k}(n)$, as $n \rightarrow \infty$, is given by
\begin{equation} \non
a_{m,k}(n)-a_{r,k}(n)=\frac{4}{15} \pi^3 (r^2-m^2)\,(8n)^{-\frac{9+k}{4}} \left( \frac{k}{3} \right)^{\frac{k+7}{4}}\,e^{\pi \sqrt{\frac{2kn}{3}}}+O\!\left(n^{-3-\frac{k}{4}}e^{\pi \sqrt{\frac{2kn}{3}}}\right).
\end{equation}
\end{corollary}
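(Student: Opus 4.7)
The strategy is to apply Theorem~\ref{th1} to both $a_{m,k}(n)$ and $a_{r,k}(n)$ and subtract. Since the coefficients $d_{m,k}(\ell)$ are independent of $m$ for $\ell=1,2$, those contributions drop out identically, so the leading surviving term in the difference is the $\ell=3$ term. Substituting $d_{m,k}(3)-d_{r,k}(3)=(r^2-m^2)/60$ yields
\begin{equation*}
a_{m,k}(n)-a_{r,k}(n) = \frac{r^2-m^2}{60}\,(2\pi)^{-\frac{k}{2}}\,n^{-\frac{8+k}{4}}\left(\pi\sqrt{\tfrac{k}{6}}\right)^{4+\frac{k}{2}} I_{-4-\frac{k}{2}}\!\left(\pi\sqrt{\tfrac{2kn}{3}}\right)+(\text{error}).
\end{equation*}

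The second step is to replace the $I$-Bessel function by its leading large-argument asymptotic $I_\nu(x)\sim e^x/\sqrt{2\pi x}$ with $x=\pi\sqrt{2kn/3}$. This contributes an additional factor of $n^{-1/4}$, pushing the $n$-exponent from $-(8+k)/4$ to $-(9+k)/4$. What remains is routine algebraic bookkeeping: the net power of $\pi$ is $-k/2+(4+k/2)-1=3$; the combination $(k/6)^{2+k/4}\cdot(2k/3)^{-1/4}$ simplifies to $2^{-(9+k)/4}(k/3)^{(7+k)/4}$; and collecting all remaining factors of $2$ produces the numerical prefactor $\tfrac{1}{60}\cdot 2^{-(3k+11)/4}$, which one verifies equals $\tfrac{4}{15}\cdot 2^{-(27+3k)/4}=\tfrac{4}{15}\cdot 8^{-(9+k)/4}$. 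This reproduces the claimed main term exactly.

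The main care is in the error estimate. One takes $N$ large enough in Theorem~\ref{th1} that its remainder is absorbed into $O(n^{-3-k/4}e^{\pi\sqrt{2kn/3}})$, and one must then check that the subleading terms in $I_\nu(x)=\frac{e^x}{\sqrt{2\pi x}}(1+O(1/x))$, together with the contributions from $\ell=4,\dots,N$ retained from Theorem~\ref{th1}, likewise fit into this error bound. This is the only non-trivial portion of the argument, since the leading asymptotic itself is an immediate consequence of the cancellation of the $\ell=1,2$ terms combined with the given value of $d_{m,k}(3)-d_{r,k}(3)$.
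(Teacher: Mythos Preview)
Your proposal is correct and follows essentially the same route as the paper: apply Theorem~\ref{th1}, use that the $\ell=1,2$ terms cancel and that $d_{m,k}(3)-d_{r,k}(3)=(r^2-m^2)/60$, then insert the Bessel asymptotic \eqref{eq:approxBessel}. The paper's own justification is the single remark that the corollary ``follows from Theorem~1.1 and using the asymptotic behavior of the Bessel function,'' so your write-up is simply a more explicit version of the same argument, with the algebraic bookkeeping carried out correctly.
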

\begin{remark}
The leading asymptotic behavior of a similar difference of coefficients was determined in \cite[Corollary 1.2]{Bringmann:2013yta} for the function $g(w;\tau)/\eta(\tau)^k$.
We observe that the two asymptotic behaviors only differ by the factor $\frac{4}{15}$. Note however that for large $m$ the coefficients $d_{m,k}(\ell)$ grow much faster in the case studied here compared to \cite{Bringmann:2013yta}.
\end{remark}

Our second main result concerns the profile of the coefficients $a_{m,k}(n)$ for $|m|\leq  \sqrt{\frac{n}{6k}}\,\frac{\log n}{\pi}$. To this end we define 
\begin{equation}\label{eqn:profile}
 P(m,\beta):=\frac{d^2}{dm^2}\left(\frac{m}{2}\coth\!\left(\frac{\beta m}{2}\right)\right)=\frac{\beta}{4}\operatorname{csch}^2\!\left(\frac{\beta m}{2}\right) \left(\beta m\operatorname{coth}\!\left(\frac{\beta m}{2}\right)-2\right),
\end{equation} 
with $\csch(x)=1/\sinh(x)$. The function $P(m,\beta)$ satisfies $P(0,\beta)=\frac{\beta}{6}$, $\int_{-\infty}^\infty P(m,\beta)dm=1$ and has variance $\int_{-\infty}^\infty m^2 P(m,\beta) dm=\frac{2\pi^2}{3\beta^2}$. Using the Taylor expansion of $P(m,\beta)$ in $m$, we obtain the limiting shape of the ratio $a_{m,k}(n)/p_k(n)$ for large $n$. This is given by:
\begin{theorem}\label{th2} Let $p_k(n)$ be the number of partitions of $n$ in $k$ colors. For $m$ as above we have, as $n \rightarrow \infty$
$$
\frac{a_{m,k}(n)}{p_k(n)}= 
P(m,\beta_k)\left(1 + O\!\left(\beta_k^{\frac{1}{2}}|m|^{\frac{1}{3}}\right)\right),
$$
where $\beta_k=\pi\sqrt{\frac{k}{6n}}$.
\end{theorem}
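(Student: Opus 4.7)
The plan is to combine the modular/saddle-point analysis underlying Theorem \ref{th1} with the Taylor expansion of $P(m,\beta)$ around $m=0$. Starting from the representation
\[ a_{m,k}(n) = \int_{-\frac12}^{\frac12} e^{-2\pi i m w}\,[q^{n-k/24}]f_k(w,\tau)\,dw, \]
I treat the inner $q$-coefficient by contour integration over $\tau=\sigma+it$ with $t=\beta_k/(2\pi)$ at the Hardy--Ramanujan saddle, then Fourier-integrate in $w$. The strategy is first to determine the $w$-profile of the inner coefficient and then to Fourier-transform in $w$ to recover $P(m,\beta_k)$.

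The key input is the modular behavior of $f_k(w,\tau)$. Using $\eta(-1/\tau)=\sqrt{-i\tau}\,\eta(\tau)$ and $\vartheta(w/\tau;-1/\tau)=-i\sqrt{-i\tau}\,e^{\pi iw^2/\tau}\vartheta(w;\tau)$, one rewrites $f_k$ in terms of the transforms under $\tau\mapsto-1/\tau$, whose Fourier expansions at the cusp $i\infty$ are dominated by their leading terms when $\tau=i\beta/(2\pi)$ with $\beta$ small. Retaining only these leading contributions yields
\[ f_k\!\left(w,\tfrac{i\beta}{2\pi}\right) \sim \left(\tfrac{\beta}{2\pi}\right)^{k/2} e^{\pi^2 k/(6\beta)}\,h(w;\beta),\qquad h(w;\beta):=\frac{4\pi^2\sin^2(\pi w)\,e^{4\pi^2 w^2/\beta}}{\beta^2\sinh^2(2\pi^2 w/\beta)}, \]
normalized so that $h(0;\beta)=1$. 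A saddle analysis on the $\sigma$-integral (analogous to the proof of Theorem \ref{th1}, with the $w$-dependent perturbations being subdominant for $|w|$ small) then gives
\[ \frac{[q^{n-k/24}]f_k(w,\tau)}{p_k(n)} = h(w;\beta_k)\bigl(1+o(1)\bigr), \]
uniformly for $w$ in a small neighborhood of $0$.

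Next, compute the $w$-integral. In the dominant range $|w|=O(\beta_k)$ one may replace $\sin^2(\pi w)$ by $(\pi w)^2$ and $e^{4\pi^2 w^2/\beta_k}$ by $1$; the substitution $u=2\pi^2 w/\beta_k$ then turns the integrand into $u^2 e^{-isu}/\sinh^2(u)$ with $s=m\beta_k/\pi$. After extending the integration to $\mathbb{R}$ (with exponentially small tail error), the integral is evaluated by closing the contour in the lower half plane (for $m>0$) and summing residues at the double poles $u=-i\pi n$, $n\geq 1$; multiplied by the Jacobian $\beta_k/(2\pi^2)$, the result equals exactly $P(m,\beta_k)$. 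An equivalent perspective is to Taylor expand $P(m,\beta_k)$ in $m$ and match with the $m^2$-expansion of $a_{m,k}(n)/p_k(n)$ that can be extracted from the $m$-dependence of the coefficients $d_{m,k}(\ell)$ for $\ell\geq 3$ (already visible in Corollary \ref{cor1}).

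The main obstacle is the uniform error bound. The three sources of error are: (i) the Taylor replacement $\sin^2(\pi w)=(\pi w)^2+O(w^4)$, whose Fourier transform produces a contribution polynomial in $|m|$; (ii) the tail truncation from $[-1/2,1/2]$ to $\mathbb{R}$, exponentially small in $1/\beta_k$; and (iii) the saddle approximation in $\tau$, relative size $O(n^{-1})$ uniformly in $w$. For $|m|$ near the top of the allowed range $\sqrt{n/(6k)}\log n/\pi$, the rapid oscillation of $e^{-2\pi i m w}$ forces a steepest-descent shift of the $w$-contour; the resulting complex saddle, governed by the equation $1/u-\coth u=im\beta_k/(2\pi)$, has cubic tangency and produces the exponent $1/3$ in $|m|^{1/3}$. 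Balancing the truncation order against the intrinsic asymptotic error yields the relative error $O(\beta_k^{1/2}|m|^{1/3})$.
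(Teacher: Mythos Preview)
Your outline reverses the order of the two integrals: you first extract the $q^n$ coefficient by a saddle analysis \emph{for fixed $w$} and only afterwards Fourier-transform in $w$, whereas the paper does the $w$-integral first (obtaining $g_{m,1}(z)$ in closed form via Bernoulli numbers, resummed to $P(m,z)$ in Lemma~\ref{mainterm}) and only then runs Wright's circle method in $z$. For the main term your residue computation of $\int u^2 e^{-isu}/\sinh^2(u)\,du$ is a legitimate alternative to the paper's Bernoulli-number resummation, and both give $P(m,\beta_k)$.

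The gap is in the error analysis. Your derivation of the stated error $O(\beta_k^{1/2}|m|^{1/3})$ is not a derivation: you assert that a steepest-descent shift in $w$ hits a saddle with ``cubic tangency'' and that ``balancing'' various errors gives the exponent $1/3$, but none of this is carried out. In particular, the equation $1/u-\coth u = im\beta_k/(2\pi)$ you write down has no cubic degeneration in the relevant regime, and nothing in your argument singles out the power $|m|^{1/3}$ rather than, say, $|m|^{1/2}$. You also claim the $\tau$-saddle error is $O(n^{-1})$ \emph{uniformly in $w$}, which is exactly the nontrivial point: the Wright error for $(q;q)_\infty^{-k}$ depends on the length of the major arc, and once you perturb by the $w$-dependent factor $h(w;\beta_k)$ the optimal arc length changes with $m$. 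The paper resolves this by an explicit $m$-dependent parametrization $z=\beta_k(1+ium^{-1/3})$ of the major arc $\mathcal D_1=[-1,1]$: with this choice the approximation $g_{m,1}(z)=\tfrac{z}{\beta_k}P(m,\beta_k)+O(\beta_k m^{2/3}P(m,\beta_k))$ of Lemma~\ref{mainterm} feeds directly into $E_3$, and integrating over the arc of length $\asymp \beta_k m^{-1/3}$ produces the relative error $n^{-1/4}m^{1/3}\asymp \beta_k^{1/2}|m|^{1/3}$ without any steepest-descent heuristics. Absent an analogous mechanism, your proposal does not establish the claimed uniform error.
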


It is an interesting open question to which distribution the probability density function $P(m,\beta)$ corresponds. Probability distributions occurred earlier for coefficients of inverse theta functions and for the cohomology of hyper-K\"ahler manifolds. For example the profile for the function $g(w;\tau)/\eta(\tau)^k$ was conjectured by Dyson \cite{Dyson:1989} (and recently proven by Bringmann and Dousse \cite[Theorem 1.3]{Bringmann:2013}; see also \cite[equation (2.13)]{Russo:1994ev}),  to be equal to $P_{\mathrm{log}}(m,\beta)=\frac{\beta}{4} \sech^2\!\left( \textstyle\frac{1}{2}\beta m \right)=\frac{\beta}{4}
\cosh^{-2}\!\left( \textstyle\frac{1}{2}\beta m \right)$, which coincides with the probability density function of the logistic distribution with mean 0 and variance $\frac{\pi^2}{3\beta^2}$. Similarly, the profiles of Betti numbers of hyper-K\"ahler manifolds found by Hausel and Rodriguez-Villegas allow often an interpretation as probability distributions \cite{Hausel:2013}. For example, the profile of the Betti numbers of Hilbert schemes on $\mathbb{C}^2$ corresponds to the Gumbel distribution \cite{Hausel:2013}. In a similar spirit, a Gaussian distribution is found for DT-invariants of $\mathbb{C}^3$ \cite{Morrison:2013}.  

The paper is organized as follows: In section \ref{sec:th1} we prove Theorem \ref{th1} by using Wright's method as in \cite{Bringmann:2013yta} and in section \ref{sec:th2} we adopt the method developed in \cite{Bringmann:2013} to our generating function of $\chi_y$-genera to prove Theorem \ref{th2}.
\section*{Acknowledgments}
We would like to thank Kathrin Bringmann for useful discussions and collaboration on closely related subjects. The first author also thanks Tam\'as Hausel for discussions. Part of the reported research was carried out while the first author was affiliated with the Camille  Jordan Institute of the University of Lyon. The research of the second author was supported by the DFG-Graduiertenkolleg ``Globale Strukturen in Geometrie und Analysis''. The second author would like to thank the Camille Jordan Institute and Trinity College Dublin for the hospitality during his stay and Michael Mertens for many useful discussions. We also thank the anonymous referees for their suggestions.

\section{Proof of theorem \ref{th1}}
\label{sec:th1}
\subsection{The main term}
We start by recalling the transformation properties under $\tau \to -1/\tau$ of the Jacobi theta and the Dedekind eta function.
\begin{lemma}\label{modular}
We have
\begin{eqnarray}
&&\eta\!\left( -\frac{1}{\tau} \right) = \left( - i \tau \right)^{\frac{1}{2}}\eta(\tau), \non \\
&&\vartheta\!\left(\frac{w}{\tau};-\frac{1}{\tau}\right)= - i \left(-i\tau\right)^{\frac{1}{2}} e^{\pi i w^2/\tau}\vartheta(w;\tau). \non
\end{eqnarray}
\end{lemma}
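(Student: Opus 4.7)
Both identities are classical, so my plan is to sketch the two standard steps: first prove the theta transformation via Poisson summation applied to the series form of $\vartheta$, then deduce the eta transformation by expanding that identity around $w=0$.

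To begin, I would apply Jacobi's triple product identity to the product formula for $\vartheta(w;\tau)$ stated in the paper to obtain the series representation
\[
\vartheta(w;\tau) = -i \sum_{n \in \mathbb{Z}} (-1)^n e^{\pi i \tau (n+1/2)^2 + 2\pi i w (n+1/2)}.
\]
Substituting $\tau \to -1/\tau$ and $w \to w/\tau$ and completing the square in the exponent introduces the factor $e^{\pi i w^2/\tau}$, after which the remaining sum can be evaluated by Poisson summation; computing the Fourier transform of the underlying Gaussian then recovers $\vartheta(w;\tau)$ up to the scalar $-i(-i\tau)^{1/2}$. The branch of $(-i\tau)^{1/2}$ is the principal one on $\HH$, and the prefactor $-i$ emerges unambiguously from the Gaussian integral, yielding the second identity.

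For the eta transformation, I would expand the product defining $\vartheta(w;\tau)$ at $w=0$: using $(\zeta^{-1};q)_\infty=(1-\zeta^{-1})(q\zeta^{-1};q)_\infty$ together with $1-\zeta^{-1}=2\pi i w+O(w^2)$, the remaining factors evaluate at $w=0$ to $i\,q^{1/8}(q;q)_\infty^3=i\,\eta(\tau)^3$, so $\vartheta(w;\tau)=-2\pi w\,\eta(\tau)^3+O(w^3)$. Substituting this expansion on both sides of the just-proved theta transformation and equating the coefficients of $w$ yields $\eta(-1/\tau)^3=(-i\tau)^{3/2}\,\eta(\tau)^3$. Taking cube roots, one gets $\eta(-1/\tau) = \omega\,(-i\tau)^{1/2}\,\eta(\tau)$ for some cube root of unity $\omega$, which is constant in $\tau$ by continuity since $\eta$ is nowhere zero on $\HH$; evaluating at $\tau=i$, where $-1/\tau=i$ and $(-i\tau)^{1/2}=1$, forces $\omega=1$.

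The main obstacle in this kind of derivation is the careful tracking of roots of unity: Poisson summation alone only determines the transformation up to a phase, and extracting a cube root introduces a further cube-root-of-unity ambiguity, so precise anchoring at a concrete base point of $\HH$ is essential. Once that bookkeeping is done, every remaining step is a routine manipulation of absolutely and uniformly convergent sums and products.
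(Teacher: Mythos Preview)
Your argument is correct and follows one of the standard derivations. The paper, however, does not prove this lemma at all: it is introduced with ``We start by recalling the transformation properties\ldots'' and is used purely as background input. So there is no proof in the paper to compare against; you have supplied more than the paper does. Your route---Poisson summation on the series form of $\vartheta$, then extracting the $\eta$-transformation from the first-order Taylor coefficient at $w=0$ and pinning down the cube root of unity at $\tau=i$---is the textbook approach and is carried out accurately, including the expansion $\vartheta(w;\tau)=-2\pi w\,\eta(\tau)^3+O(w^3)$.
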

\noindent To prove Theorem \ref{th1} we investigate the main term of $g(w;\tau)$ by using the transformation rules of $\eta(\tau)$ and $\vartheta(w;\tau)$. Recall that $q:=e^{2\pi i \tau}$ and $\zeta:=e^{2\pi i w}$. We define furthermore $z:=-2\pi i \tau$ such that $q=e^{-z}$. Then we have for $g\!\left(w; \frac{iz}{2\pi}\right)^2$ as $z\to 0$:
\begin{lemma}\label{approx}
For $0<\mathrm{Re}(z) \ll 1$, $ 0 \leq \mathrm{Re}(w) \leq 1$ we have
$$g\!\left(w; \frac{iz}{2\pi}\right)^2 =  \frac{\sin(\pi w)^2\exp\!\left(\frac{4\pi^2 w^2}{z}\right)}{\left(\frac{z}{2\pi}\right)^2\sinh\!\left(\frac{2\pi^2 w}{z}\right)^2}
\left(1+O\!\left(e^{-4\pi^2\mathrm{Re}\left(\frac1z\right)(1-w)}\right)\right).$$
\end{lemma}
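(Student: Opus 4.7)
The approach is to apply the modular transformation $\tau\mapsto -1/\tau$ of Lemma \ref{modular}: with $\tau=iz/(2\pi)$, the dual point $\tau':=-1/\tau=2\pi i/z$ has large imaginary part as $\mathrm{Re}(z)\to 0^+$, so the Fourier expansions of $\eta$ and $\vartheta$ at $i\infty$ converge exponentially fast and can be truncated to their leading terms.

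I would introduce $w':=w/\tau=-2\pi iw/z$, $q':=e^{2\pi i\tau'}=e^{-4\pi^2/z}$, $\zeta':=e^{2\pi iw'}=e^{4\pi^2w/z}$, and set $\alpha:=\mathrm{Re}(1/z)>0$ so that $|q'|=e^{-4\pi^2\alpha}$. Using $-i\tau=z/(2\pi)$ and $\pi iw^2/\tau=2\pi^2w^2/z$, Lemma \ref{modular} gives
\[
\eta(\tau)^{3}=\frac{\eta(\tau')^{3}}{(z/(2\pi))^{3/2}},\qquad \vartheta(w;\tau)=\frac{\vartheta(w';\tau')}{-i\,(z/(2\pi))^{1/2}\,e^{2\pi^2w^2/z}}.
\]

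Next I would plug in the product expansions $\eta(\tau')^{3}=(q')^{1/8}(q';q')_\infty^{3}$ and $\vartheta(w';\tau')=i(\zeta')^{1/2}(q')^{1/8}(q';q')_\infty(q'\zeta';q')_\infty((\zeta')^{-1};q')_\infty$, peel off the factor $(1-(\zeta')^{-1})$ from the last Pochhammer symbol, and use
\[(\zeta')^{1/2}-(\zeta')^{-1/2}=2\sinh(2\pi^2w/z),\qquad \zeta^{1/2}-\zeta^{-1/2}=2i\sin(\pi w).\]
Substituting into $g(w;\tau)=i(\zeta^{1/2}-\zeta^{-1/2})\eta(\tau)^{3}/\vartheta(w;\tau)$, the $(q')^{1/8}$ factors cancel, the various $\pm i$'s collapse, and a direct computation yields
\[g\!\left(w;\tfrac{iz}{2\pi}\right)=\frac{\sin(\pi w)\,e^{2\pi^2w^2/z}}{(z/(2\pi))\sinh(2\pi^2w/z)}\,R(w,z),\qquad R(w,z):=\frac{(q';q')_\infty^{2}}{(q'\zeta';q')_\infty\,(q'(\zeta')^{-1};q')_\infty}.\]

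Finally I would estimate $R(w,z)-1$. For $0\le\mathrm{Re}(w)\le 1$ the three relevant parameters have moduli $|q'|=e^{-4\pi^2\alpha}$, $|q'\zeta'|=e^{-4\pi^2\alpha(1-\mathrm{Re}(w))}$ and $|q'(\zeta')^{-1}|=e^{-4\pi^2\alpha(1+\mathrm{Re}(w))}$, so the largest individual factor in any of the three products is $1-q'\zeta'$; the standard bound $|(a;q)_\infty-1|\ll|a|/(1-|q|)$ applied termwise and combined multiplicatively gives $R(w,z)=1+O(e^{-4\pi^2\alpha(1-w)})$, after which squaring and $(1+O(\varepsilon))^2=1+O(\varepsilon)$ yield the claim. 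The only real obstacle is the bookkeeping: matching branches of $(-i\tau)^{1/2}$ and the $\pm i$ multipliers from Lemma \ref{modular} without introducing a spurious phase, and checking that among the three candidates $q'$, $q'\zeta'$, $q'(\zeta')^{-1}$ it is indeed $q'\zeta'$ that dominates throughout $0\le\mathrm{Re}(w)\le 1$ — this is precisely what pins down the factor $(1-w)$ in the exponent of the remainder.
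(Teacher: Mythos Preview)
Your proposal is correct and follows essentially the same route as the paper: apply the modular inversion of Lemma~\ref{modular}, expand $\eta$ and $\vartheta$ at the cusp $2\pi i/z$, and read off the leading term together with the error governed by the largest surviving Pochhammer factor. The only cosmetic difference is that you work with $g$ and square at the end, while the paper transforms $g^2$ directly; your explicit identification of $q'\zeta'$ as the dominant parameter (hence the factor $1-w$ in the exponent) is a welcome clarification that the paper leaves implicit.
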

\begin{proof}
Using Lemma \ref{modular}, we obtain
\[
g\!\left(w; \frac{iz}{2\pi}\right)^2=-\frac{\left(\zeta^{\frac12}-\zeta^{-\frac12}\right)^2\eta\!\left(\frac{iz}{2\pi}\right)^{6}}{\vartheta\!\left(w; \frac{iz}{2\pi}\right)^2} 
=\frac{\left(\zeta^{\frac12}-\zeta^{-\frac12}\right)^2\exp\!\left(\frac{4\pi^2 w^2}{z}\right)\eta\! \left(\frac{2\pi i}{z}\right)^{6}}{\left(\frac{z}{2\pi}\right)^{2}\vartheta\!\left(\frac{2\pi w}{iz}; \frac{2\pi i}{z}\right)^2}.
\]
Then expanding the definitions one obtains
\begin{eqnarray*}
g\!\left(w; \frac{iz}{2\pi}\right)^2&=-\frac{\left(\zeta^{\frac12}-\zeta^{-\frac12}\right)^2\exp\left(\frac{4\pi^2 w^2}{z}\right)}{\left(\frac{z}{2\pi}\right)^{2}\left(e^{\frac{2\pi^2 w}{z}}-e^{-\frac{2\pi^2 w}{z}}\right)^2 } \prod_{n\geq 1}\frac{\left(1-e^{-\frac{4\pi^2 n}{z}}\right)^{4}}{\left(1-e^{\frac{4\pi^2 w}{z}-\frac{4\pi^2 n}{z}}\right)^2\left(1-e^{-\frac{4\pi^2 w}{z}-\frac{4\pi^2 n}{z}}\right)^2}\\ 
&=\frac{\sin(\pi w)^2\exp\left(\frac{4\pi^2 w^2}{z}\right)}{\left(\frac{z}{2\pi}\right)^2\sinh\!\left(\frac{2\pi^2 w}{z}\right)^2}
\left(1+O\!\left(e^{-4\pi^2\text{Re}\left(\frac1z\right)(1-w)}\right)\right),
\end{eqnarray*}
which completes the proof.
\end{proof}

We continue by using Cauchy's theorem to  express the coefficients of $f_k(w;\tau)$ as contour integrals. We define $f_{m,k}(\tau)$ as the coefficient of $\zeta^m$, 
\be \non
f_{m,k}(\tau):=\frac{2\,q^{\frac{k}{24}}}{\eta(\tau)^k}\int_0^{\frac{1}{2}} g(w;\tau)^2\,\cos(2\pi m w)dw,
\ee 
where we used that $f_k(-w;\tau)=f_k(w;\tau)$. From $f_{m,k}(\tau)$, the $a_{m,k}(n)$ are consequently obtained as
\be
\label{eq:amkn}
a_{m,k}(n):=\frac{1}{2\pi i}\int_{\mathcal{C}} \frac{f_{m,k}(\tau)}{q^{n+1}}dq,
\ee  
where $\mathcal{C}$ is a circle surrounding 0 clockwise. We choose $e^{-\beta_k}$ for the radius, with $\beta_k:= \pi\sqrt{\frac{k}{6n}}$. Lemmas \ref{modular} and \ref{approx}  show that in order to obtain the asymptotic main term of $a_{m,k}(n)$, the following split is natural \cite{Wright:1971, Bringmann:2013yta}
\[
a_{m,k}(n)=M + E,
\] 
with
\begin{align*}
M&:= \frac1{2\pi i}\int_{\mathcal{C}_1}\frac{f_{m,k}(\tau)}{q^{n+1}}dq,\\
E&:= \frac1{2\pi i}\int_{\mathcal{C}_2}\frac{f_{m,k}(\tau)}{q^{n+1}}dq,
\end{align*}
where $\CC_1$ is the arc going counterclockwise from phase $-\beta_k$ to $\beta_k$, and $\CC_2$ is the complement of $\CC_1$ in $\CC$. 

The leading term will follow from $M$, whereas $E$ will contribute to the error.
We first consider $M$ and split this further into
$$
M=M_1+E_1,
$$
with 
\begin{eqnarray}
\label{eq:M2}
&&M_1 := \frac{1}{2\pi i} \int_{\mathcal{C}_1} \frac{g_{m,1}(z)}{\left(e^{-z};e^{-z}\right)^k_\infty}\, q^{-(n+1)}\, dq, \\
\label{eq:E1}
&&E_1 := \frac{1}{2\pi i} \int_{\mathcal{C}_1} \frac{g_{m,2}(z)}{\left(e^{-z};e^{-z}\right)^k_\infty}\, q^{-(n+1)} dq,
\end{eqnarray}
and $g_{m,1}(z)$ and $g_{m,2}(z)$ are defined by
\begin{eqnarray}\label{defineGm}
g_{m,1}(z) &:=& \frac{8\pi^2}{z^2}\int_0^{\frac12}\frac{\sin(\pi
  w)^2}{\sinh\!\left(\frac{2\pi^2 w}{z}\right)^2} e^{\frac{4\pi^2
    w^2}{z}} \cos(2\pi m w) dw,\\
g_{m,2}(z) &:=&  2\int_0^{\frac12} \left( g\! \left( w; \frac{iz}{2\pi}\right)^2 - \frac{ \sin(\pi w)^2}{\left(\frac{z}{2\pi}\right)^2 \sinh\left(\frac{2 \pi^2 w}{z}\right)^2} e^{\frac{4\pi^2 w^2}{z}} \right) \cos(2\pi  mw) dw. \non
\end{eqnarray}
In view of Lemma \ref{approx} this is the natural splitting.

Continuing in analogy with \cite{Bringmann:2013}, we insert the Taylor expansions of $\sin^2(\pi w)$, $\cos(2\pi m w)$, and $\exp\!\left(\frac{4\pi^2w^2}{z}\right)$, 
\begin{eqnarray}
&&\sin(\pi w)^2=-\frac14\left(e^{2\pi iw}+e^{-2\pi iw}-2\right)=-\frac12 \sum_{\ell\geq 1}(-1)^\ell\frac{(2\pi w)^{2\ell}}{(2\ell)!}, \non \\
&&\cos(2\pi m w)=\sum_{\ell \geq 0} (-1)^\ell \frac{(2\pi m w)^{2\ell}}{(2\ell)!}, \non \\
&&\exp\!\left(\frac{4\pi^2w^2}{z}\right)=\sum_{j\geq 0}\frac{\left(\frac{4\pi^2w^2}{z}\right)^j}{j!},\non
\end{eqnarray}
into equation \eqref{defineGm}. Since the Taylor series converge absolutely, we can exchange the sums and the integral, such that we find for $g_{m,1}(z)$
\begin{equation*}
g_{m,1}(z)= \sum_{\ell_1\geq 1,\ell_2\geq 0,\atop  j\geq 0}(-1)^{\ell_1+\ell_2+1} \frac{(2\pi)^{2(\ell_1+\ell_2+j+1)}}{(2\ell_1)!\,(2\ell_2)!\,j!} \frac{m^{2\ell_2}}{z^{j+2}} \int_0^{\frac12}\frac{w^{2(\ell_1+\ell_2+j)}}{\sinh\!\left(\frac{2\pi^2w}{z}\right)^2}dw.
\end{equation*} 

We are thus left to evaluate integrals of the shape $(j\in\N^*)$
\begin{equation}\label{coshint}
\int_0^{\frac12}\frac{w^{2j}}{\sinh\!\left(\frac{2\pi^2w}{z}\right)^2}dw.
\end{equation}
This integral is convergent since the integrand behaves as $w^{2j-2}$ as function of $w$. We next extend the integration domain $[0,\frac{1}{2}]$ to $[0,\infty]$. Using the incomplete Gamma function $\Gamma\left(j;x\right):= \int_{x}^{\infty} e^{-t}t^{j-1}dt$, the error may be bounded by
\be
\label{eq:errorint}
\ll \int_{\frac12}^\infty w^{2j} e^{-4\pi^2 w\text{Re}\left(\frac1z\right)}dw \ll \left(\textrm{Re}\!\left(\frac{1}{z}\right)\right)^{-2j-1}\Gamma\left(2j+1; 2\pi^2\, \textrm{Re}\!\left(\frac{1}{z}\right)\right)\ll e^{-2\pi^2\, \textrm{Re}\left(\frac{1}{z}\right)},
\ee
where we throughout use that $g(x)\ll f(x)$ means $g(x)=O(f(x))$, and the well known fact that 
$$
\Gamma\left(j;x\right) \sim x^{j-1}\, e^{-x},
$$
as $x \rightarrow \infty$. In the new integral we make the change of variables $\frac{2\pi w}{z}=u$. The path then is given by $\text{Arg}(u)=\text{Arg}(z)$.
Using the Residue Theorem we can shift the path of integration down to $\R$ giving
\[
\left(\frac{z}{2\pi}\right)^{2j+1}\int_0^\infty \frac{u^{2j}}{\sinh(\pi u)^2}du.
\]
Now define
\[
\mathcal{B}_j:=\int_0^\infty \frac{u^{2j}}{\sinh(\pi u)^2}du.
\]
We will need the following evaluation
\begin{lemma}\label{Inter}
We have
\[
\mathcal{B}_j= \frac{(-1)^{j+1} B_{2j}}{\pi},
\]
where $B_j$ denotes the $j$-th Bernoulli number.
\end{lemma}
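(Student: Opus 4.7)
The plan is to compute the integral $\mathcal{B}_j$ by expanding the integrand as a geometric series, interchanging the sum and integral, evaluating the resulting elementary integrals, and recognizing the sum as a Riemann zeta value. The final identity then follows from Euler's classical formula for $\zeta(2j)$.

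More concretely, I would start from the identity
\begin{equation*}
\frac{1}{\sinh(\pi u)^2} = \frac{4 e^{-2\pi u}}{(1-e^{-2\pi u})^2} = 4\sum_{n=1}^{\infty} n\, e^{-2\pi n u},
\end{equation*}
where the last equality follows from differentiating the geometric series $\sum_{n\geq 0} e^{-2\pi n u}$. Since $u > 0$ the series converges absolutely, so Fubini's theorem allows the exchange
\begin{equation*}
\mathcal{B}_j = 4\sum_{n=1}^{\infty} n \int_0^{\infty} u^{2j}\, e^{-2\pi n u}\, du = 4\sum_{n=1}^{\infty} n \cdot \frac{(2j)!}{(2\pi n)^{2j+1}} = \frac{2\,(2j)!}{\pi\,(2\pi)^{2j}}\,\zeta(2j),
\end{equation*}
after a single change of variable in the Gamma integral.

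Finally, invoking Euler's formula
\begin{equation*}
\zeta(2j) = \frac{(-1)^{j+1}\,(2\pi)^{2j}\,B_{2j}}{2\,(2j)!},
\end{equation*}
the factorials and powers of $2\pi$ cancel, leaving $\mathcal{B}_j = (-1)^{j+1}B_{2j}/\pi$, as claimed.

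I do not expect any serious obstacle here: absolute convergence of the geometric series for $u>0$ makes the interchange of sum and integral routine, and the only nontrivial ingredient is Euler's classical evaluation of $\zeta(2j)$, which is standard. (An alternative route, should one prefer to avoid $\zeta$, is to integrate by parts against $-\tfrac{1}{\pi}\tfrac{d}{du}[\coth(\pi u)-1]$, which reduces the problem to the Mellin transform of $(e^{2\pi u}-1)^{-1}$ and leads to the same Bernoulli identity; the boundary terms vanish since $j\ge 1$ guarantees $u^{2j}(\coth(\pi u)-1)\to 0$ at both endpoints.)
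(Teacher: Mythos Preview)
Your argument is correct. The geometric-series expansion of $\sinh(\pi u)^{-2}$, the termwise integration (justified by Tonelli since everything is nonnegative and $j\ge 1$ ensures convergence at the origin), and the appeal to Euler's formula for $\zeta(2j)$ all go through without issue.

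The paper proceeds quite differently: it symmetrizes the integral to all of $\mathbb{R}$, shifts the contour to $\mathbb{R}+\tfrac{i}{2}$, and introduces the generating function $g(u,T)=e^{2\pi i Tu}/\sinh(\pi u)^2$. A residue computation at $u=i$ together with the $i$-periodicity of $\sinh(\pi u)^2$ yields the closed form $\int_{\mathbb{R}+i/2} g(u,T)\,du = 4T/(1-e^{2\pi T})$, and matching Taylor coefficients in $T$ against the Bernoulli generating function $x/(e^x-1)=\sum B_m x^m/m!$ produces $\mathcal{B}_j$ directly. Your route is shorter and entirely real-analytic, at the cost of importing Euler's evaluation of $\zeta(2j)$ as a black box; the paper's contour argument is self-contained (it effectively reproves the relevant piece of Euler's formula) and fits the complex-analytic flavor of the surrounding computations, but is more elaborate. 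Either is perfectly adequate here.
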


\begin{proof}
The proof is similar in spirit to \cite[Lemma 5.2]{Bringmann:2012}. We first extend the integral to
$\mathbb{R}$. Since the poles all lie at $i\mathbb{Z}/\{0\}$, we can 
shift the integral away from the real axis. One obtains
$$
\mathcal{B}_j=\frac{1}{2} \int_{\mathbb{R}+\frac{i}{2}} 
\frac{u^{2j}}{\sinh(\pi u)^2}du.
$$
Define the function $g(u,T):=\frac{e^{2\pi i Tu}}{\sinh(\pi
  u)^2}$. Its residue is given by
$$
2\pi i\,
\mathrm{Res}_{u=i}(\,g(u,T)\,)=\left(\int_{\mathbb{R}+\frac{i}{2}}-\int_{\mathbb{R}+\frac{3i}{2}}\right)g(u,T)du=-4Te^{-2\pi
T}.
$$
Moreover, we have that 
$\int_{\mathbb{R}+\frac{3i}{2}}g(u,T)du=\int_{\mathbb{R}+\frac{i}{2}}g(u+i,T)du=e^{-2\pi
T} \int_{\mathbb{R}+\frac{i}{2}}g(u,T)du$, which gives us the
integral
$$
\int_{\mathbb{R}+\frac{i}{2}}g(u,T)du=\frac{4T}{1-e^{2\pi
    T}}.
$$
The generating function of the Bernoulli numbers $B_m$
\begin{equation}\label{Bernoulli}
\frac{x}{e^x-1}=\sum_{m=0}^\infty B_m \frac{x^m}{m!},
\end{equation}
and the expansion of the numerator of $\frac{e^{2\pi i Tu}}{\sinh(\pi
  u)^2}$ in the integral gives the desired result.
\end{proof}

Now combining Lemma \ref{Inter} with the error (\ref{eq:errorint}), we have
\begin{eqnarray}
\label{eq:Taylorgz}
g_{m, 1}(z)&=&2\sum_{\ell_1\geq 1,\ell_2\geq 0, \atop j\geq 0} (-1)^{j} \frac{m^{2\ell_2}\, }{(2\ell_1)!\,(2\ell_2)!\,j!}\,z^{2(\ell_1+\ell_2)+j-1}\\
&&\times\left(B_{2(\ell_1+\ell_2+j)}+O\!\left(|z|^{-2(\ell_1+\ell_2+j)-1}e^{-2\pi^2\mathrm{Re}\left(\frac{1}{z}\right)}\right)\right).\non
\end{eqnarray}

To evaluate the integral $M_1$ defined in (\ref{eq:M2}), we can proceed as in \cite{Bringmann:2013yta, Wright:1971}. First we define the coefficients $d_{m,k}(\ell)$ as the Taylor coefficients of $g_{m,1}(z)$, where we let $N \in \N$,
\begin{eqnarray}
\label{eq:dmkz}
e^{-\frac{kz}{24}} g_{m,1}(z)=:\sum_{\ell=1}^N d_{m,k}(\ell)\,z^\ell +O\!\left(z^{N+1}\right).
\end{eqnarray}
The first few coefficients are given by
\be
d_{m,k}(1)=\frac{1}{6}, \qquad d_{m,k}(2)=\frac{1}{30}-\frac{k}{144},\qquad d_{m,k}(3)=\frac{23}{2520}-\frac{m^2}{60}-\frac{k}{720}+\frac{k^2}{6912}. \non
\ee
Having obtained equation (\ref{eq:dmkz}), we make two further splits, where the first one is natural in view of Lemma \ref{modular}
$$
M_1 = M_2 + E_2,
$$
with
\begin{align*}
 M_2 & := \frac{1}{2\pi i}\int_{\mathcal{C}_1}\frac{g_{m,1}(z)}{q^{n+1}}\left(\frac{z}{2\pi}\right)^{\frac{k}{2}}e^{-\frac{kz}{24}+\frac{k\pi^2}{6z}}dq, \\
 E_2 & := \frac{1}{2\pi i}\int_{\mathcal{C}_1}\frac{g_{m,1}(z)}{q^{n+1}}\left(\frac{1}{(e^{-z};e^{-z})_{\infty}^k}-\left(\frac{z}{2\pi}\right)^{\frac{k}{2}}e^{-\frac{kz}{24}+\frac{k\pi^2}{6z}}\right)dq,
\end{align*}
and
$$
M_2 = M_3 + E_3,
$$
where
\begin{align*}
 M_3 &:= \frac{1}{2\pi i}\sum_{\ell=1}^{N} d_{m,k}(\ell)\int_{\mathcal{C}_1}\frac{1}{q^{n+1}}\left(\frac{z}{2\pi}\right)^{\frac{k}{2}} e^{\frac{k\pi^2}{6z}}z^{\ell}dq, \\
 E_3 &:=  \frac{1}{2\pi i} \int_{\mathcal{C}_1}\frac{1}{q^{n+1}}\left(\frac{z}{2\pi}\right)^{\frac{k}{2}} e^{\frac{k\pi^2}{6z}}\left(e^{-\frac{kz}{24}}g_{m,1}(z) - \sum_{\ell =1}^{N}d_{m,k}(\ell)z^{\ell}\right)dq.
\end{align*}

After a change of the integration variable $v=z/\beta_k$, the main term $M_3$ consists of contour integrals of the form
\be \non
\mathcal{I}_s(\alpha):=\frac{1}{2\pi i} \int_{1-i}^{1+i} v^se^{\alpha\left(v+\frac{1}{v} \right)}dv, \qquad \alpha>0.
\ee
Lemma 3.1 in \cite{Bringmann:2013yta} estimates this integral in terms of the $I$-Bessel function
\be
\label{eq:IksBessel} \non
\mathcal{I}_s(\alpha)= I_{-s-1}\!\left(2\alpha\right) + O\!\left( e^{\frac{3}{2}\alpha} \right),
\ee
with
\be
\label{eq:IBessel} 
I_\ell(2\sqrt{z})=\frac{z^{\frac{\ell}{2}}}{2\pi i}\int_{-\infty}^{(0+)} t^{-\ell-1} \exp\!\left(t+\frac{z}{t}\right)dt,
\ee
where the integral is along any counterclockwise contour looping from $-\infty$ around $0$ back to $-\infty$. 
Theorem 1.1 follows now from substitution of these expressions and $\alpha=\beta_k$, except for the determination of the error terms, which we bound in the following subsection. Corollary \ref{cor1} follows from Theorem 1.1 and using the asymptotic behavior of the Bessel function \cite{Andrews99}
\be
\label{eq:approxBessel}
I_{s}(x) = \frac{e^x}{\sqrt{2\pi x}} + O\!\left(\frac{e^x}{x^{\frac{3}{2}}}\right). 
\ee

\subsection{The error term}
We determine in this subsection the magnitude of the error terms $E,E_1,E_2,$ and $E_3$, and that they are ignorable compared to the main term. We show that the main error is due to $E_3$. We start by computing bounds for the error terms coming from the different approximations near the dominant pole starting with $E_1$. To this end we first bound $g_{m,2}(z)$, which is given by \eqref{defineGm}, near the dominant pole

\begin{lemma}\label{errorterm}
We have for $z\in \CC_1$ and $\beta_k\to 0$
$$
g_{m,2}(z) \ll \frac{e^{-\frac{3\pi^2}{2 \beta_k}}}{\beta_k^2}.
$$
\end{lemma}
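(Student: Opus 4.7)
\textbf{Proof plan for Lemma \ref{errorterm}.} The approach is to feed the error estimate from Lemma~\ref{approx} directly into the definition of $g_{m,2}(z)$. Writing the bracketed integrand as the explicit model $\sin(\pi w)^2 e^{4\pi^2 w^2/z}/\bigl((z/2\pi)^2\sinh(2\pi^2 w/z)^2\bigr)$ times the correction $O\bigl(e^{-4\pi^2\mathrm{Re}(1/z)(1-w)}\bigr)$ supplied by Lemma~\ref{approx}, and using $|\cos(2\pi m w)|\le 1$ and $\sin(\pi w)^2\le 1$ on $[0,\tfrac12]$, the task reduces to showing
$$
\int_0^{1/2}\frac{\bigl|e^{4\pi^2 w^2/z}\bigr|}{|z/(2\pi)|^2\,\bigl|\sinh(2\pi^2 w/z)\bigr|^2}\,e^{-4\pi^2\mathrm{Re}(1/z)(1-w)}\,dw \;\ll\; \frac{e^{-3\pi^2/(2\beta_k)}}{\beta_k^2}.
$$

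Next I would record the elementary contour geometry. On $\mathcal{C}_1$ one has $|q|=e^{-\beta_k}$ and $\arg q\in[-\beta_k,\beta_k]$, so that $z=-\log q=\beta_k+i\theta$ with $|\theta|\le\beta_k$. This gives $\beta_k\le|z|\le\sqrt{2}\,\beta_k$, and via $1/z=\bar z/|z|^2$ the key inequality $\rho:=\mathrm{Re}(1/z)=\beta_k/|z|^2\ge 1/(2\beta_k)$. These two bounds will supply the factor $1/\beta_k^2$ from the prefactor and convert $e^{-3\pi^2\rho}$ into $e^{-3\pi^2/(2\beta_k)}$ at the final step.

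The heart of the estimate uses the identity $|\sinh(a+ib)|^2=\sinh^2 a+\sin^2 b$, so that $|\sinh(2\pi^2 w/z)|^2\ge\sinh^2(2\pi^2 w\rho)$. When $2\pi^2 w\rho\ge 1$, the elementary bound $\sinh(x)\ge e^x/4$ yields $|\sinh(2\pi^2 w/z)|^2\gg e^{4\pi^2 w\rho}$, and the three exponential factors collapse into
$$
\frac{e^{4\pi^2 w^2\rho}}{e^{4\pi^2 w\rho}}\cdot e^{-4\pi^2\rho(1-w)}=e^{4\pi^2\rho(w^2-1)}.
$$
On $[0,\tfrac12]$ the polynomial $w^2-1$ is increasing and attains its maximum $-3/4$ at the endpoint $w=\tfrac12$, producing exactly $e^{-3\pi^2\rho}\le e^{-3\pi^2/(2\beta_k)}$. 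Combined with the prefactor $|z/(2\pi)|^{-2}\ll 1/\beta_k^2$ and an integration over a range of bounded length, this yields the claimed bound.

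The only remaining subtlety is the small-$w$ window where $2\pi^2 w\rho=O(1)$, so the exponential lower bound on $\sinh$ is invalid. There I would use $\sinh(2\pi^2 w\rho)\ge 2\pi^2 w\rho$ together with $|\sin(\pi w)|\le\pi w$ (now reinstating $\sin(\pi w)^2$ in the numerator) to cancel the $w$'s, leaving an $O(|z|^{-2}\rho^{-2})=O(1)$ ratio since $|z|\rho=\beta_k/|z|\in[1/\sqrt{2},1]$. The remaining factor $e^{-4\pi^2\rho(1-w)}$ is dominated by $e^{-2\pi^2\rho}\le e^{-\pi^2/\beta_k}$, and after integrating over a window of length $O(\beta_k)$ the contribution is vastly smaller than $e^{-3\pi^2/(2\beta_k)}/\beta_k^2$, so the outer region dominates. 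The main point to track is that the sharp constant $3/2$ is produced not by an interior critical point but by the endpoint $w=\tfrac12$ of the integration, where the quadratic $w^2-1$ is furthest from $0$.
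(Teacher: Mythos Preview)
Your approach is essentially the same as the paper's: feed the error from Lemma~\ref{approx} into $g_{m,2}$, use the contour geometry $|z|\asymp\beta_k$ and $\rho=\mathrm{Re}(1/z)\ge 1/(2\beta_k)$, and observe that the combined exponent $4\pi^2\rho(w^2-1)$ is maximized at the endpoint $w=\tfrac12$, yielding the constant $3/2$. The paper organizes the $\sinh$ estimate slightly differently: it writes $\sinh(2\pi^2 w/z)^2=\tfrac14 e^{4\pi^2 w/z}(1-e^{-4\pi^2 w/z})^2$ and then bounds the ratio $\sin(\pi w)/(1-e^{-4\pi^2 w/z})$ uniformly by $O(1)$ on $[0,\tfrac12]$, which avoids your case split altogether. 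Your route via $|\sinh(a+ib)|^2\ge\sinh^2 a$ is perfectly valid and arrives at the same exponent.

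There is, however, a numerical slip in your small-$w$ window that you should correct. You bound $e^{-4\pi^2\rho(1-w)}\le e^{-2\pi^2\rho}\le e^{-\pi^2/\beta_k}$ using only $1-w\ge\tfrac12$, and then assert this contribution is ``vastly smaller'' than $e^{-3\pi^2/(2\beta_k)}/\beta_k^2$. It is not: $\beta_k\,e^{-\pi^2/\beta_k}$ is in fact \emph{larger} than $e^{-3\pi^2/(2\beta_k)}/\beta_k^2$ as $\beta_k\to 0$, since $\beta_k^3\gg e^{-\pi^2/(2\beta_k)}$. The fix is immediate: in your window $w\le 1/(2\pi^2\rho)=O(\beta_k)$, so $1-w\ge 3/4$ (indeed $1-w=1-O(\beta_k)$), giving $e^{-4\pi^2\rho(1-w)}\le e^{-3\pi^2\rho}\le e^{-3\pi^2/(2\beta_k)}$. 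After integrating over a window of length $O(\beta_k)$ this is $O(\beta_k\,e^{-3\pi^2/(2\beta_k)})\ll e^{-3\pi^2/(2\beta_k)}/\beta_k^2$, and your conclusion stands.
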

\begin{proof}
Recall that $\CC_1$ is the arc for $q=e^{-z}$ with phase going from $-\beta_k$ to $\beta_k$ and radius $e^{-\beta_k}$. One straightforwardly establishes the following bounds on this arc
\begin{equation} \non
 \qquad |z| \gg \beta_k, \qquad 
 \textrm{Re}\!\left(\frac{1}{z}\right)  \geq \frac{1}{2 \beta_k}.
\end{equation}
Furthermore, the quotient
$$
\left|\frac{\sin(\pi w)}{1-e^{-\frac{4\pi^2w}{z}}}\right| \ll 1,
$$
is bounded for $w\in \left[0,\frac{1}{2}\right]$, since the numerator is obviously bounded and for $\left| \frac{w}{z}\right|\ll 1$, the quotient is $\ll |z| $, and for larger $w$ the denominator is $\ll 1$ for $\beta_k\to 0$. Using Lemma \ref{approx} and the bounds above, we bound $g_{m,2}(z)$ by
\begin{align*}
g_{m,2}(z) & \ll \frac{1}{|z|^2} \int_{0}^{\frac{1}{2}} \left| \frac{ \sin(\pi w)^2}{\left(1 - e^{-\frac{4\pi^2 w}{z}}\right)^2}\right| e^{4 \pi^2\textrm{Re}\left(\frac{1}{z}\right)(w^2 -1 )} dw 
         \ll \frac{e^{-\frac{3\pi^2}{2 \beta_k}}}{\beta_k^2 },
\end{align*}
where we used that $w^2 -1$ has its maximum on $\left[0,\frac{1}{2}\right]$ at $\frac{1}{2}$. 
\end{proof}

With this result we find for $E_1$, defined in equation (\ref{eq:E1}),
\begin{lemma}
\label{E1}
We have for $n \rightarrow \infty$
$$
E_1 \ll n^{-\frac{k}{4} + \frac{1}{2}}e^{\pi\sqrt{\frac{2kn}{3}} - \frac{3}{2}\pi\sqrt{\frac{6n}{k}}}.
$$
\end{lemma}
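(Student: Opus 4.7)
The plan is to bound $|E_1|$ by $\sup_{\mathcal{C}_1}|\text{integrand}|$ times the arclength of $\mathcal{C}_1$. Three ingredients are required: the uniform estimate on $g_{m,2}(z)$ furnished by Lemma \ref{errorterm}, an estimate for $(e^{-z};e^{-z})_\infty^{-k}$ coming from the modular transformation of $\eta$, and the trivial bound $|q^{-(n+1)}| = e^{\beta_k(n+1)}$ on the chosen circle. Parametrizing $\mathcal{C}_1$ by $q = e^{-\beta_k + i\phi}$ with $\phi\in [-\beta_k,\beta_k]$ gives $z = \beta_k - i\phi$, whence $|z| \asymp \beta_k$, $\tfrac{1}{2\beta_k} \leq \mathrm{Re}(1/z) \leq \tfrac{1}{\beta_k}$, and the arclength of $\mathcal{C}_1$ is $O(\beta_k)$.

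Applying Lemma \ref{modular} to $\eta(iz/(2\pi))$ yields
\[
(e^{-z};e^{-z})_\infty^{-k} = e^{-kz/24}\left(\frac{z}{2\pi}\right)^{k/2} e^{k\pi^2/(6z)}\,(e^{-4\pi^2/z};e^{-4\pi^2/z})_\infty^{-k}.
\]
Since $\mathrm{Re}(4\pi^2/z) \geq 2\pi^2/\beta_k$ is large as $n\to\infty$, the final Pochhammer factor equals $1+O(e^{-2\pi^2/\beta_k})$. Combining with the estimates $|e^{k\pi^2/(6z)}| \leq e^{k\pi^2/(6\beta_k)}$ and $|z|\leq \sqrt{2}\beta_k$ produces the uniform bound $|(e^{-z};e^{-z})_\infty^{-k}| \ll \beta_k^{k/2}\,e^{k\pi^2/(6\beta_k)}$ on $\mathcal{C}_1$. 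Multiplying this together with Lemma \ref{errorterm}, the factor $e^{\beta_k(n+1)}$, and the arclength $O(\beta_k)$, one obtains
\[
|E_1| \ll \beta_k^{k/2-1}\,e^{\beta_k n + k\pi^2/(6\beta_k) - 3\pi^2/(2\beta_k)}.
\]

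To conclude, substituting $\beta_k = \pi\sqrt{k/(6n)}$ produces the saddle-point identity $\beta_k n + k\pi^2/(6\beta_k) = \pi\sqrt{2kn/3}$ (which is precisely the reason $\beta_k$ was chosen), together with $3\pi^2/(2\beta_k) = \tfrac{3\pi}{2}\sqrt{6n/k}$ and $\beta_k^{k/2-1}\asymp n^{1/2-k/4}$, yielding the claim. The delicate point is checking that the estimate for $(e^{-z};e^{-z})_\infty^{-k}$ is genuinely uniform over $\mathcal{C}_1$: the factor $e^{k\pi^2/(6z)}$ attains its maximum at $\phi=0$ while the decay $e^{-3\pi^2/(2\beta_k)}$ in Lemma \ref{errorterm} arises from the opposite estimate $\mathrm{Re}(1/z)\geq 1/(2\beta_k)$. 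Since only a pointwise product bound is needed, these different extrema cause no difficulty and their combination dovetails exactly into the stated exponent.
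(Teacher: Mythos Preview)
Your proof is correct and follows essentially the same route as the paper's own argument: bound $g_{m,2}$ via Lemma~\ref{errorterm}, bound $(e^{-z};e^{-z})_\infty^{-k}$ via the modular transformation of $\eta$ using $\mathrm{Re}(1/z)\le 1/\beta_k$, multiply by $|q^{-(n+1)}|$ and the arclength $O(\beta_k)$, and substitute $\beta_k=\pi\sqrt{k/(6n)}$. Your closing remark about the two different extrema for $\mathrm{Re}(1/z)$ is a nice clarification of why the uniform product bound is legitimate, though the paper leaves this implicit.
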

\begin{proof}
Using the bound of $g_{m,2}(z)$ (see Lemma \ref{errorterm}) we obtain directly
$$
E_1 \ll \frac{e^{-\frac{3\pi^2}{2\beta_k}}}{\beta_k^2}\int_{\mathcal{C}_1}\frac{q^{-(n+1)}}{\left(e^{-z};e^{-z}\right)_{\infty}^k}dq.
$$
From Lemma \ref{modular} it is easy to see that
%
%
%
\be
\label{eq:qpochk} 
\frac{1}{(e^{-z};e^{-z})^k_\infty}=\left(\frac{z}{2\pi}\right)^{\frac{k}{2}}\, e^{-\frac{kz}{24}+\frac{k\pi^2}{6z}} \left(1+O\!\left(e^{-\frac{4\pi^2}{z}}\right)\right).
\ee
As a result we find for $n\to \infty$
\begin{equation*}
E_1 \ll\beta_k^{\frac{k}{2}-2}e^{-\frac{3\pi^2}{2\beta_k}}\int_{-\beta_k}^{\beta_k} e^{(n-\frac{k}{24})\beta_k + \frac{k \pi^2}{6}\text{Re}\left(\frac{1}{z}\right)}dz.\\
\end{equation*}
Now we investigate the exponent, which can be rewritten and bounded by
$$
\pi \sqrt{\frac{2kn}{3}} - \frac{3}{2}\pi\sqrt{\frac{6n}{k}}. 
$$ 
This follows from the following upperbound for $\re\!\left( \frac{1}{z}\right)$ on $\mathcal{C}_1$
$$
 \frac{\textrm{Re}(z)}{|z|^2} \leq \frac{1}{\textrm{Re}(z)} = \frac{1}{\beta_k},
$$
and so
\begin{equation}\label{eq:useful} 
\left(n-\frac{k}{24}\right)\beta_k + \frac{\pi^2 k}{6}\textrm{Re}\!\left(\frac{1}{z}\right) <\pi \sqrt{\frac{2kn}{3}}
\end{equation}
by substituting $\beta_k=\pi\sqrt{\frac{k}{6n}}$. Since the length of the integration path is of order $O(\beta_k)$, we arrive at the desired result.
\end{proof}
The next step is to evaluate the error $E_2$ coming from approximation of the $q$-Pochhammer symbol by its functional equation.
\begin{lemma} 
\label{E2}
We have for $n \rightarrow \infty$
$$
E_2 \ll n^{-\frac{k}{4}-1}e^{\pi\sqrt{\frac{2kn}{3}}-4\pi\sqrt{\frac{6n}{k}}}.
$$
\end{lemma}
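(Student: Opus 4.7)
The plan is to mimic the strategy used for $E_1$ in Lemma 2.3, with the modular approximation (\ref{eq:qpochk}) of the $q$-Pochhammer symbol now taking the role played there by Lemma \ref{errorterm}. First, I would use (\ref{eq:qpochk}) to rewrite the bracketed factor in the integrand of $E_2$ as
\begin{equation*}
\frac{1}{(e^{-z};e^{-z})_\infty^k} - \left(\frac{z}{2\pi}\right)^{\frac{k}{2}} e^{-\frac{kz}{24}+\frac{k\pi^2}{6z}} = \left(\frac{z}{2\pi}\right)^{\frac{k}{2}} e^{-\frac{kz}{24}+\frac{k\pi^2}{6z}}\cdot O\!\left(e^{-\frac{4\pi^2}{z}}\right),
\end{equation*}
so that the additional factor $e^{-4\pi^2/z}$ is what will ultimately produce the $e^{-4\pi\sqrt{6n/k}}$ savings in the final bound. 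Next, I would bound $g_{m,1}(z)$ on $\mathcal{C}_1$: the expansion (\ref{eq:Taylorgz}) (equivalently (\ref{eq:dmkz}), whose leading coefficient is $d_{m,k}(1)=\tfrac{1}{6}$) shows that $g_{m,1}$ is analytic at the origin and vanishes to first order, so $g_{m,1}(z)\ll |z|\ll \beta_k$ uniformly on the arc.

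Combining these inputs yields
\begin{equation*}
|E_2|\ll \int_{\mathcal{C}_1}|g_{m,1}(z)|\,|z|^{\frac{k}{2}}\,\exp\!\left(-\tfrac{k\,\text{Re}(z)}{24}+\left(\tfrac{k\pi^2}{6}-4\pi^2\right)\text{Re}\!\left(\tfrac{1}{z}\right)\right)|q|^{-(n+1)}\,|dq|.
\end{equation*}
On $\mathcal{C}_1$ one has $\text{Re}(z)=\beta_k$ and $\text{Re}(1/z)\leq 1/\beta_k$. In the K3 case of interest, $k=24$ (more generally whenever $k\geq 24$), the coefficient $\tfrac{k\pi^2}{6}-4\pi^2$ is non-negative, so the exponent is maximized at the apex of the arc and is bounded by
$\left(n-\tfrac{k}{24}\right)\beta_k+\left(\tfrac{k\pi^2}{6}-4\pi^2\right)\tfrac{1}{\beta_k}$. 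Substituting $\beta_k=\pi\sqrt{k/(6n)}$ and proceeding exactly as in (\ref{eq:useful}) reduces this to $\pi\sqrt{2kn/3}-4\pi\sqrt{6n/k}$.

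Finally, collecting the polynomial contributions $|g_{m,1}(z)|\ll \beta_k$, $|z|^{k/2}\ll \beta_k^{k/2}$, and the arc length $\ll\beta_k$ produces an overall factor $\beta_k^{\frac{k}{2}+2}\asymp n^{-\frac{k}{4}-1}$, which combined with the exponential gives the claimed bound. The main obstacle is verifying that the implicit $O$-constant in (\ref{eq:qpochk}) is uniform along the entire arc $\mathcal{C}_1$, including near its endpoints where $\text{Re}(1/z)$ is smallest; this follows from the uniform validity of the modular transformation of $\eta$ in sectors $|\arg z|\leq \pi/4$, a region in which $\mathcal{C}_1$ is contained once $n$ is sufficiently large. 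A minor side check is that the $m$-dependence hidden in $g_{m,1}$ contributes only a factor independent of $n$, since $m$ is fixed.
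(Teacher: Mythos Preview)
Your proof is correct and follows essentially the same route as the paper: bound the bracketed difference via the modular approximation \eqref{eq:qpochk}, bound $g_{m,1}(z)\ll|z|\ll\beta_k$ from its Taylor expansion, control the exponent using $\mathrm{Re}(1/z)\le 1/\beta_k$ on $\mathcal{C}_1$, and then count the powers of $\beta_k$ from $g_{m,1}$, $|z|^{k/2}$, and the arc length. Your explicit remark that the argument, as written, requires $k\ge 24$ for the coefficient $\tfrac{k\pi^2}{6}-4\pi^2$ to be non-negative is a point the paper glosses over (it simply substitutes $\mathrm{Re}(1/z)=1/\beta_k$); since the application is $k=24$ this does not affect anything, and for smaller $k$ one would instead use the lower bound $\mathrm{Re}(1/z)\ge 1/(2\beta_k)$, which still leaves $E_2$ negligible compared to $E_3$.
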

\begin{proof}
On $\mathcal{C}_1$ the following approximation is valid 
$$
 |z|^2 = \beta_k^2 +  \textrm{Im}(z)^2 \leq 2\beta_k^2.
$$
Since the leading term in the Taylor series of $g_{m,1}(z)$, given in equation \eqref{eq:Taylorgz}, is $\frac{z}{6}$, we can bound $g_{m,1}(z)$ as
$$
g_{m,1}(z) \ll |z| \ll \beta_k.
$$
From (\ref{eq:qpochk}) we know that 
$$
\frac{1}{(e^{-z};e^{-z})_{\infty}^k} - \left(\frac{z}{2\pi}\right)^{\frac{k}{2}}e^{-\frac{kz}{24}+\frac{k\pi^2}{6z}} = O\!\left(z^\frac{k}{2}\,e^{-\frac{kz}{24}+\frac{4\pi^2}{z}(\frac{k}{24}-1)}\right).
$$
Now we have
$$
E_2 \ll \int_{\mathcal{C}_1} dz\, e^{n\beta_k+\frac{\pi^2k}{6\beta_k}-\frac{4\pi^2}{\beta_k}} \beta_k^{\frac{k}{2}+1}.
$$
By noting that the integration path is of order $O(\beta_k)$ and plugging in $\beta_k$ we obtain the lemma.
\end{proof}
To finish the analysis of the error terms on the major arc we calculate $E_3$ coming from the replacement of our main term by a Taylor series. 
\begin{lemma}
We have, as $n \rightarrow \infty$
$$
E_3 \ll n^{-1-\frac{N}{2}-\frac{k}{4}}e^{\pi\sqrt{\frac{2kn}{3}}}.
$$
\end{lemma}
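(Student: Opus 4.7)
The plan is to transfer the Taylor approximation (\ref{eq:dmkz}) into the integrand of $E_3$, and then estimate the resulting contour integral with the same elementary bounds that were already established for $E_1$ and $E_2$. First I would rewrite $E_3$ in the $z$-variable: on $\mathcal{C}_1$ we have $q=e^{-z}$ with $\mathrm{Re}(z)=\beta_k$ and $\mathrm{Im}(z)\in[-\beta_k,\beta_k]$, so $dq=-e^{-z}dz$ and $|e^{-z}|\leq 1$. Thus
\[
E_3 \ll \int_{\mathcal{C}_1} |z|^{\frac{k}{2}}\,e^{n\,\mathrm{Re}(z)+\frac{k\pi^2}{6}\mathrm{Re}(1/z)}\,\left|e^{-\frac{kz}{24}}g_{m,1}(z)-\sum_{\ell=1}^{N} d_{m,k}(\ell)\,z^{\ell}\right||dz|.
\]

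Next I would invoke the defining property (\ref{eq:dmkz}), which says precisely that the factor in absolute values is $O\!\left(|z|^{N+1}\right)$ for $|z|\ll 1$. Combining this with the crude estimate $|z|\leq\sqrt{2}\,\beta_k$ (coming from $|z|^2=\beta_k^2+\mathrm{Im}(z)^2\leq 2\beta_k^2$) yields the clean bound
\[
E_3 \ll \beta_k^{\frac{k}{2}+N+1}\int_{\mathcal{C}_1} e^{n\,\mathrm{Re}(z)+\frac{k\pi^2}{6}\mathrm{Re}(1/z)}\,|dz|.
\]

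For the remaining exponent I would reuse the inequality (\ref{eq:useful}) that was already established in the proof of Lemma \ref{E1}; it gives
\[
n\,\mathrm{Re}(z)+\frac{k\pi^2}{6}\mathrm{Re}\!\left(\frac{1}{z}\right)\leq \pi\sqrt{\frac{2kn}{3}}+O(\beta_k),
\]
where the $O(\beta_k)$ correction absorbs the $-\frac{k}{24}\beta_k$ term that was dropped. Since the length of $\mathcal{C}_1$ (parametrized in $z$) is exactly $2\beta_k$, the integral contributes at most a factor of order $\beta_k$. Therefore
\[
E_3 \ll \beta_k^{\frac{k}{2}+N+2}\,e^{\pi\sqrt{\frac{2kn}{3}}}.
\]

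Finally, substituting $\beta_k=\pi\sqrt{k/(6n)}\asymp n^{-1/2}$ converts $\beta_k^{\frac{k}{2}+N+2}$ into $n^{-\frac{k}{4}-\frac{N}{2}-1}$ (times a constant depending only on $k$ and $N$), which yields the claimed bound. The only step that requires any care is the justification that the Taylor remainder $O(|z|^{N+1})$ holds uniformly for $z\in\mathcal{C}_1$, but this is immediate from (\ref{eq:Taylorgz}) together with the absolute convergence of the Taylor series in the disk $|z|\ll 1$; since $|z|\leq\sqrt{2}\beta_k\to 0$ as $n\to\infty$, $\mathcal{C}_1$ is eventually contained in any such disk, and no additional analytical work is needed.
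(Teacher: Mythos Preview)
Your proof is correct and follows essentially the same approach as the paper: both use the Taylor remainder estimate $e^{-kz/24}g_{m,1}(z)-\sum_{\ell=1}^N d_{m,k}(\ell)z^\ell=O(|z|^{N+1})$, the bound $|z|\ll\beta_k$ on $\mathcal{C}_1$, inequality~(\ref{eq:useful}) for the exponent, and the fact that the arc has length $O(\beta_k)$. Your version is simply more explicit about the change of variables and the uniformity of the Taylor remainder, but the argument is the same.
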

\begin{proof}
Using
$$
e^{-\frac{kz}{24}}g_{m,1}(z) - \sum_{\ell =1}^{N}d_{m,k}(\ell)z^{\ell} = O(z^{N+1}),
$$
and changing variables we have
$$
E_3 \ll \int_{\mathcal{C}_1} |z|^{\frac{k}{2} +N +1}e^{n\beta_k+\frac{\pi^2 k}{6}\textrm{Re}\left(\frac{1}{z}\right)}dz.
$$
Using \eqref{eq:useful}, $|z| \ll \beta_k$ and that the path is of order $O(\beta_k)$ gives the desired result.
\end{proof}
To obtain an error term away from the dominant pole (also known as the minor arc) we use the following Lemma proved in \cite{Bringmann:2013}.
\begin{lemma}\label{LemmaBrDo}
Assume that $\tau = u + iv \in \H$ with $Mv \leq |u| \leq \frac{1}{2}$ for $u>0$ and $v \rightarrow 0$, we have that
$$
\vert (q;q)_\infty^{-1}\vert \ll \sqrt{v}\exp\left[\frac{1}{v}\left(\frac{\pi}{12}- \frac{1}{2\pi}\left(1 - \frac{1}{\sqrt{1 + M^2}}\right)\right)\right].
$$
\end{lemma}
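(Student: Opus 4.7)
The plan is to bound $|(q;q)_\infty^{-1}|$ by comparing it with its value at the ``radial'' point $\tau = iv$ (where the oscillating phase $u$ is switched off), for which the standard modular estimate applies, and to harvest an extra savings in the exponent from the constraint $|u|\geq Mv$. First I would start from the Lambert series
\[
\log(q;q)_\infty^{-1} \;=\; \sum_{n\geq 1}\frac{q^n}{n(1-q^n)}
\]
and take real parts. Subtracting the analogous radial sum and simplifying (using $\re\tfrac{q^n}{1-q^n}=\tfrac{|q|^n\cos(2\pi nu)-|q|^{2n}}{|1-q^n|^2}$) yields the clean identity
\[
\log\frac{(|q|;|q|)_\infty^{-1}}{|(q;q)_\infty^{-1}|} \;=\; \sum_{n\geq 1}\frac{|q|^n\bigl(1+|q|^n\bigr)\bigl(1-\cos(2\pi nu)\bigr)}{n\,(1-|q|^n)\,|1-q^n|^2},
\]
a sum of non-negative terms.

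The radial factor is handled by the transformation $\eta(-1/\tau)=\sqrt{-i\tau}\,\eta(\tau)$ applied to $\tau = iv$, which gives $(|q|;|q|)_\infty^{-1}\sim \sqrt v\,e^{\pi/(12v)}$ as $v\to 0$; this accounts for the $\sqrt v$ prefactor and the $\pi/12$ in the exponent of the target bound. For the extra savings on the minor arc I would drop all terms with $n\geq 2$ (they are non-negative) and retain only the $n=1$ term. Using $|1-q|^2=(1-|q|)^2+2|q|(1-\cos(2\pi u))$ together with $1-|q|\sim 2\pi v$, $1-\cos(2\pi u)\sim 2\pi^2 u^2$, and $|q|(1+|q|)\to 2$ as $v\to 0$, the $n=1$ term becomes
\[
\frac{|q|(1+|q|)(1-\cos(2\pi u))}{(1-|q|)\,|1-q|^2}\;=\;\frac{u^2}{2\pi v(u^2+v^2)}\bigl(1+O(v)\bigr).
\]
On the minor arc $|u|\geq Mv$ the quotient $u^2/(u^2+v^2)$ attains its infimum at $|u|=Mv$, namely $M^2/(M^2+1)$; the elementary inequality $M^2/(M^2+1)\geq 1-(1+M^2)^{-1/2}$ (equivalent to $M^2+1\geq\sqrt{M^2+1}$) then produces the required savings $\tfrac{1}{2\pi v}\bigl(1-(1+M^2)^{-1/2}\bigr)$, and combining with the radial asymptotic yields the lemma.

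The main obstacle will be upgrading the pointwise limit of the $n=1$ term to a lower bound uniform on the whole range $Mv\leq |u|\leq 1/2$ as $v\to 0$, rather than valid only in the regime $u,v\to 0$. Since the inequality $M^2/(M^2+1)>1-(1+M^2)^{-1/2}$ is strict for $M>0$, the $O(v)$ correction is harmless, but to be rigorous one should expand $1-e^{-2\pi v}$, $|1-q|^2$, and $1-\cos(2\pi u)=2\sin^2(\pi u)$ with explicit error terms; the inequality $\sin(\pi u)\geq 2u$ on $[0,\tfrac12]$ furnishes a uniform lower bound on $1-\cos(2\pi u)$ even when $|u|$ is not small. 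The discarded higher-$n$ terms, being non-negative, only reinforce the estimate, and any leftover multiplicative constant is absorbed into the implied $\ll$.
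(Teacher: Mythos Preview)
The paper does not give its own proof of this lemma; it simply quotes the statement and cites Bringmann--Dousse \cite{Bringmann:2013} for the proof. So there is no in-paper argument to compare against.

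Your outline is correct and is essentially the standard argument (and, as far as I know, close to what Bringmann--Dousse do). The Lambert-series identity
\[
\log\frac{(|q|;|q|)_\infty^{-1}}{|(q;q)_\infty^{-1}|}=\sum_{n\ge 1}\frac{|q|^n(1+|q|^n)(1-\cos 2\pi nu)}{n(1-|q|^n)|1-q^n|^2}
\]
is exactly right, the radial asymptotic via the $\eta$-transformation is standard, and keeping only $n=1$ is the usual trick. Your diagnosis of the only subtlety is also accurate: the asymptotic $1-\cos(2\pi u)\sim 2\pi^2u^2$ is only local, so the clean formula $\tfrac{u^2}{2\pi v(u^2+v^2)}$ needs to be replaced by a genuine lower bound valid on the whole range $Mv\le|u|\le\tfrac12$. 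Splitting into $|u|\ge\delta$ (where $T_1\sim \tfrac{1}{2\pi v}$ uniformly, which already beats the target) and $|u|\le\delta$ (where the Taylor expansions are uniform and the strict inequality $\tfrac{M^2}{M^2+1}>1-(1+M^2)^{-1/2}$ absorbs the $O(\delta^2)+O(v)$ errors) finishes the job. The inequality $\sin(\pi u)\ge 2u$ on $[0,\tfrac12]$ that you mention is another clean way to get a uniform lower bound directly. Either route works; nothing is missing from your plan.
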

This means that the contribution of the other roots of unity will be suppressed as we see by bounding the error term $E$.
\begin{lemma} 
 We have, for every $0<\varepsilon \leq 1$, as $ n \rightarrow \infty$ 
$$
E \ll n^{-\frac{k-6}{4}} e^{\pi\sqrt{\frac{2kn}{3}}\left(1-\frac{3\varepsilon}{4\pi^2}\right)}.
$$
\end{lemma}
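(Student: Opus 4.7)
The plan is to bound $|f_{m,k}(\tau)\,q^{-(n+1)}|$ uniformly along $\mathcal{C}_2$ and integrate, following the strategy used in \cite{Bringmann:2013}. Parametrize $\mathcal{C}_2$ by $\tau = u+iv$ with $v = \beta_k/(2\pi)$ fixed and $v \leq |u| \leq 1/2$. Using the integral representation of $f_{m,k}$, the estimate $|\cos(2\pi mw)|\leq 1$, and the identity $|q|^{k/24}|\eta(\tau)|^{-k} = |(q;q)_\infty|^{-k}$, one obtains
\[
\bigl|f_{m,k}(\tau)q^{-(n+1)}\bigr| \leq 2\,e^{(n+1)\beta_k}\,|(q;q)_\infty|^{-k}\int_0^{1/2}|g(w;\tau)|^2\,dw.
\]

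Two estimates are now needed. First, Lemma \ref{LemmaBrDo} applied with $M=1$ (consistent with $|u|\geq v$ on $\mathcal{C}_2$) gives
\[
|(q;q)_\infty|^{-k} \ll v^{k/2}\exp\!\left[\frac{k\pi}{12v} - \frac{k\delta_0}{v}\right],\qquad \delta_0 := \frac{1}{2\pi}\!\left(1-\frac{1}{\sqrt{2}}\right),
\]
the exponential factor $e^{-k\delta_0/v}$ being the key improvement over the naive major-arc estimate. Second, one needs a uniform polynomial-in-$1/v$ bound on $\int_0^{1/2}|g(w;\tau)|^2\,dw$. Using the identity $g(w;\tau)^2 = 4\sin^2(\pi w)\eta(\tau)^6/\vartheta(w;\tau)^2$, together with either the product formula for $\vartheta$ and $|1-q^n\zeta^{\pm 1}|\geq 1-|q|^n$ or the modular transformation of $\vartheta$ from Lemma \ref{modular} at the cusp nearest $\tau$, this integral is $\ll v^{-3}$.

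Substituting $\beta_k=\pi\sqrt{k/(6n)}$, using $|q^{-(n+1)}|=e^{(n+1)\beta_k}$, and accounting for the $O(1)$ length of $\mathcal{C}_2$, the exponent in $|E|$ equals
\[
n\beta_k+\frac{k\pi^2}{6\beta_k}-\frac{2\pi k\delta_0}{\beta_k}=\pi\sqrt{\frac{2kn}{3}}\!\left(1-\frac{6\delta_0}{\pi}\right),
\]
while the polynomial prefactor $v^{k/2-3}$ matches $n^{-(k-6)/4}$. A short calculation shows $6\delta_0/\pi=(3/\pi^2)(1-1/\sqrt{2})>3/(4\pi^2)\geq 3\varepsilon/(4\pi^2)$ for every $\varepsilon\in(0,1]$, so that the exponent we obtain is at most $\pi\sqrt{2kn/3}(1-3\varepsilon/(4\pi^2))$, giving the claim.

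The main obstacle is the uniform polynomial bound on $|g(w;\tau)|^2$: any hidden exponential growth in $1/v$ would negate the exponential gain from Lemma \ref{LemmaBrDo}. The resolution is that the zeros of $\vartheta(w;\tau)$ in $w$ lie at lattice points and are cancelled by the $\sin^2(\pi w)$ factor in the identity for $g^2$, so that a cusp-by-cusp analysis using the transformation law for $\vartheta$ produces only polynomial factors of $1/v$ after the Gaussian exponential $e^{\pi i w^2/\tau}$ is absorbed into the $\sinh^2$-type denominator.
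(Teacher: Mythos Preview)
Your overall framework is correct and matches the paper: parametrize $\mathcal{C}_2$, apply Lemma~\ref{LemmaBrDo} with $M=1$ to bound $|(q;q)_\infty|^{-k}$, combine with a uniform bound on $g(w;\tau)^2$, and check exponents. Your exponent calculation and the final comparison $\frac{6\delta_0}{\pi}=\frac{3}{\pi^2}(1-1/\sqrt{2})>\frac{3}{4\pi^2}\ge\frac{3\varepsilon}{4\pi^2}$ are correct.

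The genuine gap is the polynomial bound on $|g(w;\tau)|^2$. Your first proposal---the product for $\vartheta$ together with $|1-q^n\zeta^{\pm1}|\ge 1-|q|^n$---does \emph{not} give a polynomial bound. Writing $g(w;\tau)=\prod_{n\ge1}(1-q^n)^2/\bigl((1-q^n\zeta)(1-q^n\zeta^{-1})\bigr)$ and applying your inequality yields
\[
|g(w;\tau)|\le\prod_{n\ge1}\frac{(1+|q|^n)^2}{(1-|q|^n)^2},
\]
and for $|q|=e^{-\beta_k}$ one has $\log\prod_{n\ge1}\frac{1+|q|^n}{1-|q|^n}\sim\frac{1}{\beta_k}\int_0^\infty\log\coth(t/2)\,dt=\frac{\pi^2}{4\beta_k}$, which is exponentially large in $1/\beta_k$. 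This is exactly the failure mode you warn about in your final paragraph: it swamps the saving from Lemma~\ref{LemmaBrDo}. Your second proposal (modular transformation at the cusp nearest to $\tau$) is not carried out; it would require the transformation of $\vartheta$ under a general element of $\SL_2(\Z)$ and a Farey dissection of $\mathcal{C}_2$, none of which follows from Lemma~\ref{modular}.

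The paper avoids all of this by invoking the Andrews--Garvan partial-fraction expansion
\[
g(w;\tau)=1+(1-\zeta)\sum_{m\ge1}\frac{(-1)^m q^{(m^2+m)/2}}{1-\zeta q^m}+(1-\zeta^{-1})\sum_{m\ge1}\frac{(-1)^m q^{(m^2+m)/2}}{1-\zeta^{-1} q^m}.
\]
The Gaussian factor $q^{m^2/2}$ makes this sum $\ll(1-|q|)^{-1}\sum_{m\ge1}e^{-\beta_k m^2/2}\ll\beta_k^{-3/2}\ll n^{3/4}$, uniformly on the whole circle $|q|=e^{-\beta_k}$ and uniformly in real $w$. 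This gives precisely the $v^{-3}$ bound you asserted for $|g|^2$, and is the missing ingredient in your argument.
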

\begin{proof}
We first bound $g(w;\tau)$. To this end, we write $g(w;\tau)$ as a sum over its poles \cite{AG}
$$
g(w;\tau) = 1 + \left(1-\zeta\right)\sum_{m \geq 1} \frac{(-1)^m q^{\frac{m^2 + m}{2}}}{1-\zeta q^m} + \left(1-\zeta^{-1}\right)\sum_{m \geq 1} \frac{(-1)^m q^{\frac{m^2 + m}{2}}}{1-\zeta^{-1}q^m}.
$$
Therefore $g(w;\tau)$ can be bounded for $\im(\tau)=\frac{\beta_k}{2\pi}$, $\im(w)=0$ and $n\to \infty$ as follows
$$
g(w;\tau) \ll \sum_{m\geq1}\frac{\vert q\vert^{\frac{m^2+m}{2}}}{1-\vert q \vert^m} \ll \frac{1}{1-\vert q \vert} \sum_{m\geq1}e^{-\frac{\beta_k m^2}{2}} \ll \beta_k^{-\frac{3}{2}} \ll n^{\frac{3}{4}}, 
$$
where the second last bound comes from comparison with a gaussian integral. 
Thus
\begin{equation}\label{eq:gsquare} \non
g(w;\tau)^2 \ll n^{\frac{3}{2}}.
\end{equation}
We use Lemma \ref{LemmaBrDo} for the arc $\CC_2$. Recall that $\tau = \frac{iz}{2\pi}$ and $\textrm{Re}(z)= \beta_k = \pi\sqrt{\frac{k}{6n}}$. Consequently, $v=\frac{\beta_k}{2\pi}$ and $M$ in Lemma \ref{LemmaBrDo} equals 1. Using this and the bound for $g(w;\tau)$, we directly obtain 
$$
E \ll n^{\frac{3}{2}}\int_{\CC_2} \beta_k^\frac{k}{2} e^{n\beta_k+n \beta_k\left(1 -\frac{6}{\pi^2}\left(1-\frac{1}{\sqrt{2}} \right)\right)} \, dz.
$$
Using that for $n\to \infty$, $ -\frac{6}{\pi^2}n\beta_k\,(1-\frac{1}{\sqrt{2}})<-\frac{6}{4\pi^2}n\beta_k$ and that the integration path is $O(1)$ finishes the proof.
\end{proof}
\vspace{.3cm}
This finishes the proof of Theorem \ref{th1}, since we have computed the main term $M_3$ and determined that the leading error among the error terms $E$, $E_i$, $i=1,2,3$ is given by $E_3$.
\section{Proof of Theorem \ref{th2}}\label{sec:th2}
In this section we calculate the main term that contributes to the profile coming from approaching the main singularity.  To deduce Theorem \ref{th2} we then use Wright's circle method. Moreover we detect the error coming from terms near the dominant pole and away from the dominant pole, giving the range where the asymptotic expansion is valid.

\subsection{The main term}

To determine the profile of $a_{m,k}(n)$ as a function of $m$ for large $n$, we start by determining an expansion of $g_{m,1}(z)$ which is valid for a wide range of $m$. The range of $m$ is $1\leq |m|\leq\frac{1}{6\beta_k}\log n$. One verifies that with this range the error in Theorem \ref{th2} goes to zero for large $n$. Furthermore, we set $z=\beta_k\left(1 + i u m^{-\frac{1}{3}}\right)$ for $|m|\geq 1$ and as before $\beta_k:= \pi \sqrt{\frac{k}{6 n}}$. 

To prove Theorem \ref{th2} we continue in much the same way as in Section \ref{th1} using the approach of \cite{Bringmann:2013, Bringmann:2013yta} to perform Wright's variant \cite{Wright:1971} of the circle method. We recall the definition of $a_{m,k}(n)$ (\ref{eq:amkn})
$$
a_{m,k}(n):= \frac{1}{2\pi i }\int_{\mathcal{C}}\frac{f_{m,k}(q)}{q^{n+1}}dq,
$$
where the contour is as in Section \ref{sec:th1} the counterclockwise transversal of the circle $C:= \lbrace q \in \C; |q|=e^{-\beta_k} \rbrace$. We change variables to $z=\beta_k(1+ium^{-\frac{1}{3}})$ and obtain
$$
a_{m,k}(n) = \frac{\beta_k}{2 \pi m^{\frac{1}{3}}}\int_{\mathcal{D}} f_{m,k}(e^{-z})\,e^{nz}\,du,
$$
where $\mathcal{D}$ is the interval $u\in \left[-\frac{\pi m^{\frac{1}{3}}}{\beta_k}, \frac{\pi m^{\frac{1}{3}}}{\beta_k}\right]$. We split as before
$$
a_{m,k}(n)= M + E,
$$
with
\begin{align*}
M &:= \frac{\beta_k}{2 \pi m^{\frac{1}{3}}}\int_{\mathcal{D}_1} f_{m,k}(e^{-z})e^{nz}du, \\
E &:= \frac{\beta_k}{2 \pi m^{\frac{1}{3}}}\int_{\mathcal{D}_2} f_{m,k}(e^{-z})\,e^{nz}\,du,
\end{align*}
where $\mathcal{D}_1$ is the interval $u\in \left[ -1,1\right]$ and $\mathcal{D}_2$ is the complement of $\mathcal{D}_1$ in $\mathcal{D}$. Completely analogously to Section \ref{sec:th1} we split $M=M_1+E_1$ and $M_1=M_2+E_2$, where $M_2$, $E_1$ and $E_2$ are now defined as
\begin{eqnarray}
&&M_2:=\frac{\beta_k}{2\pi m^\frac{1}{3}} \int_{\cD_1} g_{m,1}(z) \,\left( \frac{z}{2\pi}\right)^\frac{k}{2}\, e^{-\frac{kz}{24}+\frac{k\pi^2}{6z}+nz}\, du, \non \\
&&E_1:=\frac{\beta_k}{2\pi m^\frac{1}{3}} \int_{\cD_1} \frac{g_{m,2}(z)}{\left(e^{-z};e^{-z} \right)^k_\infty}\, e^{nz}\, du, \non \\
&&E_2:=\frac{\beta_k}{2\pi m^\frac{1}{3}} \int_{\cD_1} g_{m,1}(z) \,\left( \frac{1}{\left(e^{-z};e^{-z} \right)^k_\infty}-\left( \frac{z}{2\pi}\right)^\frac{k}{2}\, e^{-\frac{kz}{24}+\frac{k\pi^2}{6z}}\right)\,e^{nz}\, du. \non
\end{eqnarray} 

In the following Lemma we give an approximation for $g_{m,1}(z)$ as $z\to 0$, which is valid for the wide range of $m$ mentioned above. We resum the sum over $\ell_2$ (the exponents of $m$) in the Taylor series for $g_{m,1}(z)$ (\ref{eq:Taylorgz}). This gives an expression in terms of hyperbolic trigonometric functions
\begin{lemma}\label{mainterm}
Recall that $P(m,\beta)$ is defined in equation \eqref{eqn:profile}. Assume that $|u|\leq 1$ and $ m \leq \frac{1}{6 \beta_k}\log n$. Then we have as $ n \rightarrow \infty$
$$
g_{m, 1}(z) = \left(1 + i u m^{-\frac{1}{3}}\right) P(m,\beta_k) +  O\!\left(\beta_km^{\frac{2}{3}}P\!\left(m,\beta_k\right)\right).
$$
\end{lemma}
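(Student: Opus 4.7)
The plan is to separate the Taylor expansion \eqref{eq:Taylorgz} of $g_{m,1}(z)$ into its $j=0$ subseries and a remainder, resum the former into an explicit hyperbolic expression to identify it with $P(m,z)$ up to a controllable error, and then pass from $P(m,z)$ to $\alpha P(m,\beta_k)$ via the mean value theorem (where $\alpha:=1+ium^{-1/3}$ so that $z=\beta_k\alpha$).

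For the main term, I would set $L=\ell_1+\ell_2$ in the $j=0$ subseries
\[
A(z):=2\sum_{\ell_1\geq 1,\,\ell_2\geq 0}\frac{m^{2\ell_2}}{(2\ell_1)!(2\ell_2)!}\,z^{2(\ell_1+\ell_2)-1}\,B_{2(\ell_1+\ell_2)},
\]
and apply the binomial identity $\sum_{\ell_1=0}^{L}\binom{2L}{2\ell_1}m^{2(L-\ell_1)}=\tfrac{1}{2}[(m+1)^{2L}+(m-1)^{2L}]$ together with the Bernoulli generating function $\tfrac{x}{2}\coth(x/2)=\sum_{L\geq 0}\tfrac{B_{2L}}{(2L)!}x^{2L}$ to obtain the closed form
\[
A(z)=F(m+1)+F(m-1)-2F(m),\qquad F(y):=\tfrac{y}{2}\coth(zy/2).
\]
Since the nearest singularity of $F$ in $y$ lies at $y=\pm 2\pi i/z$, at distance $\sim 1/|z|\gg 1$ from $y=m$, Taylor expansion at $y=m$ converges and yields
\[
A(z)=F''(m)+\tfrac{1}{12}F^{(4)}(m)+\tfrac{1}{360}F^{(6)}(m)+\cdots=P(m,z)+O\bigl(z^2P(m,z)\bigr),
\]
using $F''(y)=P(y,z)$ (a direct differentiation) and the scaling $P(m,z)=z\,\Pi(mz)$ with $\Pi(y):=\tfrac{1}{4}\csch^2(y/2)(y\coth(y/2)-2)$, from which the uniform bound $|\Pi^{(k)}(y)/\Pi(y)|=O(1)$ follows.

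For the remainder $R(z):=g_{m,1}(z)-A(z)$, which corresponds to the $j\geq 1$ contributions in \eqref{eq:Taylorgz} or equivalently to the factor $e^{4\pi^2 w^2/z}-1$ in \eqref{defineGm}, I would substitute $u=2\pi^2 w/z$, Taylor expand $e^{u^2 z/\pi^2}-1$, and evaluate the resulting Fourier-type integrals $\int_0^\infty u^{2j}\cos(mzu/\pi)/\sinh^2 u\,du$ by Taylor expanding $\cos$ and invoking Lemma~\ref{Inter}. The leading $j=1$ term then equals $z^{-1}F^{(4)}(m)=O(zP(m,z))=O(\beta_k P(m,z))$, and higher $j$ contribute smaller terms by additional factors of $z^2$; altogether $g_{m,1}(z)=P(m,z)+O(\beta_k P(m,z))$.

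Finally, to pass from $P(m,z)$ to $\alpha P(m,\beta_k)$, write $P(m,z)=z\,\Pi(mz)$ and $\alpha P(m,\beta_k)=z\,\Pi(m\beta_k)$; the difference $z[\Pi(mz)-\Pi(m\beta_k)]$ is then bounded by the mean value theorem on the short segment from $m\beta_k$ to $mz$, of length $|u|\beta_k m^{2/3}$, together with the uniform bound $|\Pi'(y)|=O(\Pi(y))$ along this segment (which stays near the positive real axis), yielding $P(m,z)=\alpha P(m,\beta_k)+O(\beta_k m^{2/3}P(m,\beta_k))$. Combining with the preceding step and using $\beta_k\leq \beta_k m^{2/3}$ for $m\geq 1$ gives the claimed identity. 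The main obstacle is the remainder analysis: making it uniform in $m$ across the full range $1\leq |m|\leq \tfrac{1}{6\beta_k}\log n$, and in particular ensuring that the error inherits the exponential decay of $P(m,\beta_k)$ when $m\beta_k$ is large, requires careful handling of the $\cos(mzu/\pi)$-oscillations and of the truncation of $\int_0^\infty$ back to $\int_0^{\pi^2/z}$.
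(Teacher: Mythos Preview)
Your approach is correct in outline but takes a more elaborate route than the paper. The paper does \emph{not} isolate the full $j=0$ subseries; instead, for each fixed $\ell_2$ it keeps only the single term $(\ell_1,j)=(1,0)$, writing
\[
g_{m,1}(z)=\sum_{\ell\geq 0}\frac{(mz)^{2\ell}}{(2\ell)!}\bigl(zB_{2\ell+2}+O(|z|^2)\bigr)
= P(m,z)+O\!\bigl(|z|^2\cosh(mz)\bigr),
\]
via the Bernoulli generating function directly. Your second-difference identity $A(z)=F(m+1)+F(m-1)-2F(m)$ is a nice closed form and your Taylor step $A(z)=P(m,z)+O(z^2P(m,z))$ gives a sharper error than the paper's on the $j=0$ part; but it shifts the work to the $j\geq 1$ remainder, where you aim for the strong bound $R(z)=O(\beta_k P(m,z))$ carrying the full exponential decay of $P$. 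As you correctly identify, making that uniform over the whole range $m\leq\frac{1}{6\beta_k}\log n$ is delicate, and your sketch (returning to the integral, expanding $\cos$, invoking Lemma~\ref{Inter}) does not yet justify the decay.

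The paper sidesteps this difficulty: its remainder $O(|z|^2\cosh(mz))$ does \emph{not} share the exponential decay of $P(m,\beta_k)$, but a short two-regime check (for $m\beta_k$ bounded and for $m\beta_k\to\frac{1}{6}\log n$) shows it is nonetheless dominated by $\beta_k m^{2/3}P(m,\beta_k)$. The final step---writing $P(m,z)=z\,\Pi(mz)$ and passing from $\Pi(mz)$ to $\Pi(m\beta_k)$ by the mean value theorem with $|\Pi'|\ll|\Pi|$---is the same in both arguments. So your plan works if you either (i) complete the uniform bound on $R(z)$ with decay, or more simply (ii) replace your target $O(\beta_k P(m,z))$ by the cruder $O(|z|^2\cosh(mz))$ and add the paper's endgame comparison; option~(ii) removes exactly the obstacle you flagged.
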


\begin{proof}
Recall that we determined in the previous section the Taylor series for $g_{m, 1}(z)$ defined in equation (\ref{eq:Taylorgz}). Using the generating function of the Bernoulli numbers, given by equation (\ref{Bernoulli}), we can approximate this as
\begin{align*}
 g_{m, 1}(z)= & \sum_{\ell=0}^{\infty} \frac{(mz)^{2\ell}}{(2\ell)!}\left(z B_{2\ell +2} + O(|z|^2) \right) \\ 
                  = & z \frac{d^2}{d(mz)^2} \left(\frac{mz}{2}\coth\left(\frac{mz}{2}\right) \right) + O\!\left(|z|^2 \cosh(mz)\right) \\
= & P(m,z)+ O\!\left(|z|^2 \cosh(mz)\right).
\end{align*}

We note that $z^{-1}\,P(m,z)=:f(mz)$ is only a function of $mz$. This function $f(x)$ is clearly smooth for $x\neq 0$, and one easily verifies that $f(x)$ is also analytic for $x=0$
$$
f(x)=\frac{1}{6}+ O\!\left( x^2 \right).
$$
We can thus make a Taylor expansion of $f(x)$ around any $x\in \mathbb{R}$. Since $|f'(x)|\leq |f(x)|$, we have
\be
f(x+\varepsilon)=f(x)+f'(x)\varepsilon + O(\varepsilon^2)=f(x)+O(f(x)\varepsilon). \non
\ee
Now we apply this to $g_{m,1}(z)$ with $x=\beta_k m$ and $\varepsilon=i \beta_k m^{\frac{2}{3}}$
$$
g_{m,1}(z)=z\, f(m\beta_k)+O\!\left(\beta_k^2\, m^{\frac{2}{3}}\, (1+m^{-\frac{2}{3}})^\frac{1}{2} f(m\beta_k) \right)+O\!\left(\beta_k^2 (1+m^{-\frac{2}{3}})\cosh(m\beta_k) \right).
$$

Now we show that the first error term is larger than the second term for the full range of $m$. For that we distinguish between the cases where $\beta_km$ is bounded and where $\beta_k m$ grows as $\frac{1}{6}\log(n)$. If $\beta_k m$ is bounded, both $f(\beta_k m)$ and $\cosh(\beta_k m)$ are $O(1)$. Since in this case $m=O\!\left(n^\frac{1}{2}\right)$ for $n\to \infty$, we find thus that the first error is $O\!\left(n^{-\frac{2}{3}}\right)$ and the second error term is $O\!\left(n^{-1}\right)$. For $\beta_k m \to \frac{1}{6}\log(n)$ as $n\to \infty$, we can bound the $\cosh(\beta_k m)$ by
$$
\cosh(\beta_k m) \ll e^{\beta_k m} \ll n^{\frac{1}{6}}.
$$
Similarly, one finds for $f(\beta_km)$
$$
f(\beta_km)\ll n^{-\frac{1}{6}}\log(n).
$$
As a result, the first error becomes $O\!\left(n^{-\frac{5}{6}}\log(n)^\frac{5}{3}\right)$ and the second error $O\!\left(n^{-\frac{5}{6}}\right)$. This concludes the proof of Lemma \ref{mainterm}.  \end{proof}

This Lemma leads us to the last split $M_2=M_3+E_3$ with
\begin{eqnarray}
&&M_3:=\frac{1}{2\pi m^\frac{1}{3}} \int_{\cD_1} z\, P(m,\beta_k) \,\left( \frac{z}{2\pi}\right)^\frac{k}{2}\, e^{-\frac{kz}{24}+\frac{k\pi^2}{6z}+nz}\, du,  \non \\
&&E_3:= \frac{\beta_k}{2\pi m^\frac{1}{3}} \int_{\cD_1} \left(g_{m,1}(z) - \frac{z}{\beta_k} P(m,\beta_k) \right) \,\left( \frac{z}{2\pi}\right)^\frac{k}{2}\, e^{-\frac{kz}{24}+\frac{k\pi^2}{6z}+nz}\, du. \non 
\end{eqnarray}
In order to determine the main term, we first define the following function
$$
\mathcal{J}_{s}(\alpha):=\frac{1}{2\pi i} \int_{1-i m^{-\frac{1}{3}}}^{1+i m^{-\frac{1}{3}}} v^s e^{\alpha\left(v + \frac{1}{v}\right)}dv,\qquad \alpha>0,
$$ 
and recall that Ref. \cite[Lemma 4.2]{Bringmann:2013} shows that these integrals may be related to $I$-Bessel functions (analogously to $\mathcal{I}_s(\alpha)$ in Section \ref{sec:th1})
\begin{lemma}\label{pasy} 
As $ n \rightarrow \infty$
$$
\mathcal{J}_s(\alpha)= I_{-s-1}\!\left( 2\alpha\right) + O\!\left( \exp\!\left(\alpha \left(1+\frac{1}{1+ m^{-\frac{2}{3}}}\right)\right)\right).
$$
\end{lemma}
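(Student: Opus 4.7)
The plan is to recognize both $\mathcal{J}_s(\alpha)$ and $I_{-s-1}(2\alpha)$ as contour integrals of the same integrand $v^s e^{\alpha(v+1/v)}$ and to estimate the difference via a Hankel-type contour deformation. Starting from (\ref{eq:IBessel}), the substitution $t=\alpha v$ converts the Bessel integral into
$$I_{-s-1}(2\alpha) = \frac{1}{2\pi i}\int_{\mathcal{H}} v^s e^{\alpha(v+\frac{1}{v})}\,dv,$$
where $\mathcal{H}$ is any counterclockwise loop coming in from $-\infty$, encircling $0$ once, and returning to $-\infty$. Since the integrand is holomorphic on $\C\setminus\{0\}$, Cauchy's theorem permits deforming $\mathcal{H}$ to any contour with the same endpoints at $-\infty$ and the same winding around $0$.

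I would deform $\mathcal{H}$ to the five-piece contour consisting of a horizontal ray $\{v=x-i:x\le 1\}$, a vertical segment from $1-i$ to $1-im^{-1/3}$, the central segment $\mathcal{S}$ of $\mathcal{J}_s(\alpha)$ from $1-im^{-1/3}$ to $1+im^{-1/3}$, a symmetric vertical segment from $1+im^{-1/3}$ to $1+i$, and a leftward horizontal ray $\{v=x+i:x\le 1\}$. The integral over $\mathcal{S}$ equals $2\pi i\,\mathcal{J}_s(\alpha)$, so $I_{-s-1}(2\alpha)-\mathcal{J}_s(\alpha)$ equals the contribution of the four remaining pieces. A direct computation of $\re(v+1/v)=x+x/(x^2+y^2)$ gives the two key bounds: on the vertical extensions ($x=1$, $m^{-1/3}\le|y|\le 1$) one has $\re(v+1/v)=1+1/(1+y^2)\le 1+1/(1+m^{-2/3})$; on the horizontal legs ($|y|=1$, $x\le 1$) one has $\re(v+1/v)=x(1+1/(x^2+1))\le 3/2$, with $\re(v+1/v)\to -\infty$ as $x\to -\infty$. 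Since $m\ge 1$ gives $3/2\le 1+1/(1+m^{-2/3})$, the integrand is bounded on each extension by $|v|^{\re s}\,e^{\alpha(1+1/(1+m^{-2/3}))}$, and the exponential decay of $e^{\alpha x}$ on the horizontal legs tames the polynomial growth of $|v|^{\re s}$, ensuring absolute convergence. Summing the contributions yields the stated error term.

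The step requiring the most care is the choice of bending height. A naive horizontal leg directly out from the endpoints of $\mathcal{S}$ at $|y|=m^{-1/3}$ fails: along such a line $\re(v+1/v)=x(1+1/(x^2+m^{-2/3}))$ develops an interior local maximum of order $m^{1/3}$ as $v$ approaches the imaginary axis, which would ruin the error bound. Bending out to $|y|=1$ (any constant bounded away from $0$ would do) removes this spurious excursion; the short vertical connectors from $|y|=m^{-1/3}$ to $|y|=1$ along $x=1$ are harmless since $\re(v+1/v)=1+1/(1+y^2)$ is monotonically decreasing in $|y|$. Once the contour is arranged this way, the remaining estimates are routine applications of the bounds on $\re(v+1/v)$ above.
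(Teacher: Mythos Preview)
Your argument is correct. The paper does not give its own proof of this lemma; it simply cites \cite[Lemma~4.2]{Bringmann:2013} and remarks that the situation is analogous to the earlier treatment of $\mathcal{I}_s(\alpha)$ via \cite[Lemma~3.1]{Bringmann:2013yta}. What you have written is precisely the contour-deformation argument one expects behind that citation: rewrite $I_{-s-1}(2\alpha)$ as a Hankel integral of $v^s e^{\alpha(v+1/v)}$, deform through the segment $[1-im^{-1/3},\,1+im^{-1/3}]$, and bound the complementary pieces using $\re(v+1/v)=x+x/(x^2+y^2)$. Your observation that one must first bend out to height $|y|=1$ before running off to $-\infty$ (rather than exiting directly at height $m^{-1/3}$, where $\re(v+1/v)$ spikes to order $m^{1/3}$ near $x\sim m^{-1/3}$) is exactly the point that makes the estimate work, and mirrors the role of the fixed-height tails in the $\mathcal{I}_s$ case. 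One small wording issue: on the horizontal rays you cannot literally use the uniform bound $e^{\alpha(1+1/(1+m^{-2/3}))}$ and then integrate $|v|^{\re s}$ over an infinite interval; rather, you use the sharper bound $\re(v+1/v)\le x+1$ (say) for $x\le 0$ to get exponential decay in $x$, yielding a finite integral that is $O(e^{3\alpha/2})$, which is then absorbed into the stated error since $3/2\le 1+1/(1+m^{-2/3})$ for $m\ge 1$. You clearly have this in mind, but the sentence as written conflates the two steps.
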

With this lemma we prove the the following proposition for $M_3$
\begin{proposition} 
 We have 
$$
M_3 = P(m,\beta_k)\,p_k(n)\left(1 + O\!\left(n^{-\frac{1}{2}}\right)\right).
$$
\end{proposition}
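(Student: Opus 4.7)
The plan is to reduce $M_3$ to a standard Bessel-function integral via the change of variables $v := z/\beta_k$, and then to compare with the Wright-style asymptotic for $p_k(n)$.

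First, since $P(m,\beta_k)$ does not depend on $u$, I would pull it outside the integral and substitute $v = z/\beta_k = 1 + i u m^{-1/3}$, so that $du = -i m^{1/3}\,dv$ and $v$ traverses the segment $[1 - i m^{-1/3},\,1 + i m^{-1/3}]$. With $\alpha := n\beta_k = k\pi^2/(6\beta_k) = \pi\sqrt{kn/6}$, the exponent becomes $\alpha(v + 1/v) - k\beta_k v/24$; since $|v|\le \sqrt{1+m^{-2/3}} = O(1)$ on the contour, the factor $e^{-k\beta_k v/24}$ contributes only $1 + O(\beta_k) = 1 + O(n^{-1/2})$. After gathering the Jacobian and prefactors, the remaining integral is $2\pi i\,\mathcal{J}_{k/2+1}(\alpha)$. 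Lemma \ref{pasy} then yields $\mathcal{J}_{k/2+1}(\alpha) = I_{-k/2-2}(2\alpha) + O(\exp(\alpha(1 + 1/(1+m^{-2/3}))))$; for the admissible range $|m|\le (6\beta_k)^{-1}\log n$ one checks that $\alpha m^{-2/3}/(1+m^{-2/3})\to\infty$, and combined with the asymptotic $I_{-k/2-2}(2\alpha)\sim e^{2\alpha}/\sqrt{4\pi\alpha}$ from Eq.\ \eqref{eq:approxBessel} the Lemma-\ref{pasy} remainder is exponentially smaller than the main term. Altogether,
$$
M_3 \;=\; P(m,\beta_k)\,\frac{\beta_k^{k/2+1}}{(2\pi)^{k/2}}\,I_{-k/2-2}(2\alpha)\,\bigl(1 + O(n^{-1/2})\bigr).
$$

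Second, I would identify the right-hand side with $P(m,\beta_k)\,p_k(n)$. Running the identical Wright-style analysis on the partition generating function $(q;q)_\infty^{-k}$ (using Eq.\ \eqref{eq:qpochk} in place of Lemma \ref{approx}, with the $g_{m,1}$ factor absent) produces
$$
p_k(n) \;=\; \frac{\beta_k^{k/2+1}}{(2\pi)^{k/2}}\,I_{-k/2-1}(2\alpha)\,\bigl(1 + O(n^{-1/2})\bigr),
$$
which is the classical leading asymptotic for the number of partitions of $n$ in $k$ colors. The Bessel indices $-k/2-2$ and $-k/2-1$ differ by one unit, but Eq.\ \eqref{eq:approxBessel} gives $I_{-k/2-2}(2\alpha)/I_{-k/2-1}(2\alpha) = 1 - (k+3)/(4\alpha) + O(1/\alpha^2) = 1 + O(n^{-1/2})$, since $1/\alpha = O(n^{-1/2})$. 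Combining the two displays gives the proposition. The main obstacle I foresee is this last identification: one must either cite or re-derive the $p_k(n)$ asymptotic with remainder exactly $O(n^{-1/2})$, and then verify that the one-unit Bessel-index shift is absorbed cleanly into the final error rather than producing a larger correction.
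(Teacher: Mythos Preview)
Your approach is essentially the same as the paper's: both pull out $P(m,\beta_k)$, change variables to $v=z/\beta_k$, absorb the $e^{-k\beta_k v/24}$ factor into a $1+O(n^{-1/2})$ correction, identify the remaining integral as $\mathcal{J}_{k/2+1}(\alpha)$, and then invoke Lemma~\ref{pasy} together with the Bessel asymptotic \eqref{eq:approxBessel}. The only cosmetic difference is in the final comparison with $p_k(n)$: the paper quotes the explicit Hardy--Ramanujan/Rademacher--Zuckerman asymptotic
\[
p_k(n)=2\left(\tfrac{k}{3}\right)^{\frac{k+1}{4}}(8n)^{-\frac{k+3}{4}}e^{\pi\sqrt{2kn/3}}\bigl(1+O(n^{-1/2})\bigr)
\]
and matches the numerical prefactors directly, whereas you express $p_k(n)$ in Bessel form as $\frac{\beta_k^{k/2+1}}{(2\pi)^{k/2}}I_{-k/2-1}(2\alpha)(1+O(n^{-1/2}))$ and then argue that the one-unit index shift $I_{-k/2-2}/I_{-k/2-1}=1+O(\alpha^{-1})=1+O(n^{-1/2})$ from \eqref{eq:approxBessel}. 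Both routes are valid and amount to the same computation; your version is arguably tidier since it avoids unpacking the explicit constants. Your worry about the ``main obstacle'' is harmless: the Bessel-form asymptotic for $p_k(n)$ with $O(n^{-1/2})$ remainder is exactly what Wright's method gives (and is equivalent to the cited formula), and the index shift is indeed absorbed by \eqref{eq:approxBessel} since the leading term $e^x/\sqrt{2\pi x}$ is independent of the order.
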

\begin{proof}
The change of variables $v=1+ium^{-\frac{1}{3}}$ gives
$$
 M_3= \frac{\beta_k^{\frac{k}{2}+1}}{(2\pi)^{\frac{k}{2}}} P(m,\beta_k) \, \frac{1}{2\pi i} \int_{1-im^{-\frac{1}{3}}}^{1+im^{-\frac{1}{3}}} v^{\frac{k}{2}+1}e^{\pi v(n-\frac{k}{24})\sqrt{\frac{k}{6n}}+\frac{\pi}{v}\sqrt{\frac{nk}{6}}}dv.
$$
We approximate the integral over $v$ for $n\to \infty$ by
$$
\frac{1}{2\pi i} \int_{1-im^{-\frac{1}{3}}}^{1+im^{-\frac{1}{3}}} v^{\frac{k}{2}+1}e^{\pi v\sqrt{\frac{kn}{6}}+\frac{\pi}{v}\sqrt{\frac{kn}{6}}}\left( 1+ O\!\left(n^{-\frac{1}{2}}v \right)\right)dv.
$$
Now using the definition of $\mathcal{J}_s(\alpha)$ this equals
\be
\label{eq:Js}
\mathcal{J}_{\frac{k}{2}+1}\!\left(\pi\sqrt{\frac{kn}{6}} \right)+O\!\left(n^{-\frac{1}{2}}\mathcal{J}_{\frac{k}{2}+2}\!\left(\pi\sqrt{\frac{kn}{6}} \right) \right).
\ee
Using Lemma \ref{pasy} and the asymptotic expansion of the Bessel function given in equation (\ref{eq:approxBessel}),
equation (\ref{eq:Js}) is further approximated by
$$ 
\frac{e^{\pi\sqrt{\frac{2k n}{3}}}}{\pi\sqrt{2}\left(\frac{2}{3}k n\right)^{\frac{1}{4}}} + O\!\left(\frac{e^{\pi\sqrt{\frac{2k n}{3}}}}{n^{\frac{3}{4}}}\right) + O\!\left( \exp\left(\pi\sqrt{\frac{k n}{6}}\left(1+\frac{1}{1+ m^{-\frac{2}{3}}}\right)\right)\right).
$$
One easily sees that the first error term is the largest one and so we have for the leading term $M_3$
$$
M_3 = \frac{\beta_k^{\frac{k}{2}+1}}{(2\pi)^{\frac{k}{2}}}\,P(m,\beta_k )\frac{e^{\pi\sqrt{\frac{2k n}{3}}}}{\pi\sqrt{2}(\frac{2}{3}kn)^{\frac{1}{4}}}\left( 1 + O\!\left( n^{-\frac{1}{2}}\right)\right).
$$
Now using the following well-known formula \cite{HR2, RademacherZ:1938} for $p_k(n)$, for $n \rightarrow \infty$,
$$
p_k(n) = 2 \left(\frac{k}{3}\right)^{\frac{k+1}{4}}(8n)^{-\frac{k+3}{4}}e^{\pi\sqrt{\frac{2kn}{3}}}\left(1 + O\!\left(  n^{-\frac{1}{2}} \right)\right),
$$
we finish the proof.

\end{proof}

\subsection{The error term}
In this subsection, we discuss the error terms $E_1$, $E_2$, $E_3$ near the dominant pole. We also determine the error $E$ to Theorem \ref{th2} away from the dominant pole, which is due to the minor arc $\cD_2$. 

To start with $E_1$, one can easily verify that the bound $g_{m,2}(z)\ll \frac{1}{\beta_k^2}e^{-\frac{3\pi^2}{2\beta_k}}$ obtained in Lemma \ref{errorterm} for $z\in \CC_1$ continues to hold for $z\in \cD_1$ and $m\geq 1$. Similarly, the proof of Lemma \ref{E1} mostly goes through, except that the length of $\cD_1$ is of order $\beta_k\, m^{-\frac{1}{3}}$. As a result we have now
$$
E_1\ll n^{-\frac{k}{4}+\frac{1}{2}}\,m^{-\frac{1}{3}}\,e^{\pi \sqrt{\frac{2kn }{3}}-\frac{3\pi}{2}\sqrt{\frac{6n}{k}}}.
$$
 
To establish the bound for $E_2$, we note that for $z\in \cD_1$ and $m\geq 1$, $g_{m,1}(z)\ll P(m,\beta)$. We can follow again roughly the proof of Lemma \ref{E2}, using now $g_{m,1}(z)\ll P(m,\beta)$ and that the length of $\cD_1$ is of order $\beta_k\, m^{-\frac{1}{3}}$. Then one obtains
$$
E_2\ll n^{-\frac{k}{4}-\frac{1}{2}}\,m^{-\frac{1}{3}}\,P(m,\beta_k)\,e^{\pi\sqrt{\frac{2kn}{3}}-4\pi\sqrt{\frac{6n}{k}}}.
$$
For $E_3$, we recall that Lemma \ref{mainterm} gives
$$
g_{m,1}(z)-\frac{z}{\beta_k}P(m,\beta_k)=O\!\left(\beta_k m^\frac{2}{3} P(m,\beta_k) \right).
$$
After substituting this bound in the definition of $E_3$ and again using the length of $\cD_1$, one obtains
$$   
E_3\ll n^{-\frac{k}{4}-1}\,m^\frac{1}{3}\,P(m,\beta_k) \,e^{\pi \sqrt{\frac{2kn}{3}}}.
$$
Finally, one can also verify that the proof for $E$ in Section \ref{sec:th1}  is now applicable with $M=m^{-\frac{1}{3}}$. This leads to
$$
E\ll n^{-\frac{k-6}{4}} e^{\pi \sqrt{\frac{kn}{6}}\left(1-\frac{3}{4}m^{-\frac{2}{3}}\right)}.
$$
Therefore the dominant pole is indeed the one for $z=O(n^{-\frac{1}{2}})$. Comparing now all error terms we see that $E_3$ is again the dominating error, which concludes the proof of Theorem \ref{th2}. 


\begin{thebibliography}{99}

\bibitem{AG}
G.~Andrews and F.~Garvan, {\it Dyson's crank of a partition}, Bull. Amer. Math. Soc. {\bf 18} (1988), 161-171. 

\bibitem{Andrews99}
G.~Andrews,  R.~Askey, and R.~Roy,
{\it Special functions}, Encyclopedia Math. Appl. {\bf 71}, Cambridge
University Press, Cambridge 1999.

\bibitem{Apostol:1976}
T. Apostol, {\it Modular Functions and Dirichlet Series in Number
  Theory}, Springer-Verlag (1976). 

\bibitem{Bringmann:2012bm}
K. Bringmann, K. Mahlburg, {\it Asymptotic inequalities for positive rank and crank moments}, Transaction of AMS {\bf 366} (2014) 1073-1094, arXiv:1205.2155 [math.NT].

\bibitem{Bringmann:2013}
  K.~Bringmann and J. Dousse,
{\it On Dyson's crank conjecture and the uniform asymptotic behavior of certain inverse theta functions}, Transaction of AMS, accepted for publication,
  arXiv:1311.6089 [math.NT].
 
\bibitem{Bringmann:2012}
K.~Bringmann and A.~Folsom, {\it Almost harmonic Maass forms and Kac Wakimoto characters}, J. Reine Angew. Math. (2013), arXiv:1112.4726 [math.NT].

\bibitem{Bringmann:2013yta}
  K.~Bringmann and J.~Manschot,
{\it Asymptotic formulas for coefficients of inverse theta functions},'' Comm. in Numb. Theor. Phys. {\bf 7} (2013) 497 - 513, 
  arXiv:1304.7208 [math.NT].

\bibitem{Diaconescu:2007bf}
  E.~Diaconescu and G.~W.~Moore, 
  {\it Crossing the wall: Branes versus bundles},
  Adv.\ Theor.\ Math.\ Phys.\  {\bf 14} (2010)
  [arXiv:0706.3193 [hep-th]].

\bibitem{Dousse:2014}
J.~Dousse and M.~H.~Mertens, {\it Asymptotic Formulae for Partitions Ranks}, Acta Arithmetica, accepted for publication, arXiv:1406.6848 [math.NT].

\bibitem{Dyson:1989} 
F.~Dyson, {\it Mappings and Symmetries of Partitions}, J. Combin. Theory, Ser. A {\bf 51} (1989), 169-180.

\bibitem{EZ} M. Eichler; D. Zagier, \emph{The theory of Jacobi forms} . Birkh\"auser (1985)

\bibitem{Gottsche:1990}
L.~G\"ottsche, {\it The Betti numbers of the Hilbert scheme of points
  on a smooth projective surface}, Math.\ Ann. {\bf 286} (1990) 193.

\bibitem{Gottsche:2003}
L. ~G\"ottsche, {\it Hilbert Schemes of Points on Surfaces}, Proceedings of the ICM {\bf 2} (2002) 483 - 494.

\bibitem{HR2} G.  Hardy and S. Ramanujan,
\emph{Asymptotic formulae in combinatory analysis,}
Collected papers of Srinivasa Ramanujan \textbf{244}, AMS Chelsea Publ.,
Providence, RI, 2000.

\bibitem{Hausel:2013} 
  T.~Hausel and F. Rodriguez-Villegas,
 {\it Cohomology of large semiprojective hyperK\"ahler varieties},
  arXiv:1309.4914 [math.AG].

\bibitem{Katz:1999xq}
  S.~H.~Katz, A.~Klemm and C.~Vafa,
  {\it M theory, topological strings and spinning black holes},
  Adv.\ Theor.\ Math.\ Phys.\  {\bf 3} (1999) 1445
  [hep-th/9910181].

\bibitem{Kawai:2000px}
  T.~Kawai and K.~Yoshioka,
 {\it String partition functions and infinite products},
  Adv.\ Theor.\ Math.\ Phys.\  {\bf 4} (2000) 397
  [hep-th/0002169].

\bibitem{Kim:2014}
B.~Kim, E.~Kim, H.~Nam, {\it On the asymptotic distribution of cranks and ranks of cubic partitions}, preprint, (2014).
\bibitem{Mainka:2014}
R. Mainka, {\it On some asymptotic formulas in the theory of concave compositions}, accepted for publication, Ramanujan Journal.


\bibitem{Morrison:2013}
A.~Morrison, {\it A Gaussian distribution for refined DT invariants and 3D partitions}, Commun. Math. Phys. {\bf 331}, 1029 -- 1039 (2014), arXiv:1303.3882 [math.AG].

\bibitem{Nakajima:1999}
H.~Nakajima, {\it Lectures on Hilbert Schemes of Points on Surfaces}, AMS University Lecture Series (1999)

\bibitem{Pandharipande:2014}
R.~Pandharipande, T.~Thomas, {\it The Katz-Klemm-Vafa conjecture for K3 surfaces}, arXiv:14046698

\bibitem{RademacherZ:1938}
H. Rademacher, H. Zuckerman,  {\it On the Fourier coefficients of certain modular forms of postive dimension},  Ann. of Math. {\bf 39} (1938), 433-462.

\bibitem{Russo:1994ev}
  J.~G.~Russo and L.~Susskind,
  {\it Asymptotic level density in heterotic string theory and rotating black holes},
  Nucl.\ Phys.\ B {\bf 437} (1995) 611
  [hep-th/9405117].

\bibitem{Vafa:1994tf}
  C.~Vafa and E.~Witten,
{\it A Strong coupling test of S duality},
  Nucl.\ Phys.\ B {\bf 431} (1994) 3
  [hep-th/9408074]. 

\bibitem{Wright:1971}
E.~Wright, {\it Stacks (II)}, Q. J. Math. {\bf 22}  (1971), 107-116.

\bibitem{Rolon:2014}
J.~M.~Zapata~Rol\'on, {\it Asymptotics of higher order ospt-functions for overpartitions}, Ann. Comb., accepted for publication. 

\end{thebibliography}
\end{document}